\newtheorem{theorem}{Theorem}[section]
\newtheorem{lemma}[theorem]{Lemma}
\newtheorem{proposition}[theorem]{Proposition}
\newtheorem{assumption}[theorem]{Assumption}
\theoremstyle{remark}
\theoremstyle{definition}
\newtheorem{corollary}[theorem]{Corollary}
\newtheorem{problem}{Problem}
\def\F{\mathbb{F}}
\def\Z{\mathbb{Z}}
\DeclareMathOperator{\aut}{Aut}
\DeclareMathOperator{\Ker}{Ker}
\DeclareMathOperator{\sym}{Sym}
\DeclareMathOperator{\orb}{Orb}
\DeclareMathOperator{\core}{Core}
\DeclareMathOperator{\agl}{AGL}
\DeclareMathOperator{\ch}{~char~}
\DeclareMathOperator{\id}{id}
\DeclareMathOperator{\lcm}{lcm}
\DeclareMathOperator{\caym}{CM}
\newcommand{\sg}[1]{\langle {#1}\rangle}
\newcommand{\blue}[1]{{\color[rgb]{0,0,1}{#1}}}
\title[Cyclic groups underling only smooth skew morphisms]
{Classification of cyclic groups underlying only smooth skew morphisms}
\author[K.~Hu, I.~Kov\'acs \and Y.~S.~Kwon]
{Kan Hu$^{*}$, Istv\'an Kov\'acs \and Young Soo Kwon}
\address{K. Hu,
\newline\indent
Department of Mathematics, Zhejiang Ocean University, Zhoushan, Zhejiang 316022, P.R. China
\newline\indent UP FAMNIT, University of Primorska, Glagolja\v ska 8,
6000 Koper, Slovenia
\newline\indent UP IAM, University of Primorska, Muzejski trg 2,
6000 Koper, Slovenia}
\email{kan.hu@famnit.upr.si}
\address{I. Kov\'acs,
\newline\indent UP IAM, University of Primorska, Muzejski trg 2,
6000 Koper, Slovenia
\newline\indent UP FAMNIT, University of Primorska, Glagolja\v ska 8,
6000 Koper, Slovenia}
\email{istvan.kovacs@upr.si}
\address{Y. S. Kwon,
\newline\indent
Department of Mathematics, Yeungnam University, Kyongsan 712-749, Republic of Korea}
\email{ysookwon@ynu.ac.kr}
\thanks{The first and the second authors were supported by the Slovenian Research Agency (ARRS), the first author by the research project N1-0208 and the second author by 
the research program P1-0285 and research projects J1-1695, N1-0140, J1-2451, N1-0208 and J1-3001. The third author was supported by the Basic Science Research
Program through the National Research Foundation of Korea (NRF)
funded by the Ministry of Education (2018R1D1A1B05048450).
\newline\indent
$^{\ast}$ Corresponding author e-mail: kan.hu@famnit.upr.si
}
\keywords{skew morphism, group factorization, solvable group}
\subjclass[2010]{05C10, 05C25, 57M15}
\begin{document}
\maketitle

\begin{abstract}
A skew morphism of a finite group $A$ is a permutation $\varphi$ of $A$ fixing the
identity element and for which there is an integer-valued function $\pi$ on
$A$ such that $\varphi(ab)=\varphi(a)\varphi^{\pi(a)}(b)$ for all $a, b \in A$.
A skew morphism $\varphi$ of $A$ is smooth if the associated power function $\pi$
is constant on the orbits of $\varphi$, that is, $\pi(\varphi(a))\equiv\pi(a)\pmod{|\varphi|}$ for 
all $a\in A$. In this paper we show that every skew morphism of a cyclic group of order $n$ 
is smooth if and only if $n=2^en_1$, where $0 \le e \le 4$ and $n_1$ is an odd square-free number. 
A partial solution to a similar problem on non-cyclic abelian groups is also given.
\end{abstract}
%--------------------------------------------------------------------------------------------------------------%
\section{Introduction}\label{sec:intro}
A \emph{skew morphism} of a group $A$ is a permutation $\varphi$ 
of $A$ fixing the identity 
element of $A$ and for which there exists a function $\pi: A \to \Z$ such that
\[
\varphi(ab)=\varphi(a)\varphi^{\pi(a)}(b)~\text{for all}~a,b\in A.
\]
The function $\pi$ is referred to as the \emph{power function} associated with
$\varphi$. If $\varphi$ is fixed, then the values of $\pi$ are uniquely determined
modulo $|\varphi|$, where $|\varphi|$ denotes the order of $\varphi$, and therefore,
$\pi$ may also be defined as a function from $A$ to $\Z_{|\varphi|}$.
It is a trivial observation that if $\pi(a)=1$ for all $a \in A$, then $\varphi$ is an 
automorphism of $A$. Skew morphisms which are not automorphisms are called
\emph{proper}.

The concept of a skew morphism was introduced by Jajcay and
\v{S}ir\'a\v{n}~\cite{JS2002} to
characterize regular Cayley maps. Without going into the details, for
a Cayley map $M=\caym(A,X,P)$ of a group $A$, where $X$ is
an inverse-closed generating subset of $A$ and $P$ is a cyclic permutation of $X$,
it was shown that $M$ is regular if and only if $P$ extends to a skew morphism
of $A$~\cite[Theorem 1]{JS2002}.

Skew morphisms are closely related  to complementary factorizations of finite groups with a cyclic
factor~\cite{CJT2016}. By a \emph{complementary factorization} of a group $G$ we mean
that $G$ can be written as a product $AB$, where $A$ and $B$ are subgroups of $G$
and $A \cap B=1$. Suppose that $\varphi$ is a skew morphism of $A$. For $a \in A$, let $L_a$ 
denote the permutation of $A$ acting as $L_a(x)=ax$ for all $x \in A$, and set $L_A=\{L_a : a \in A\}$.
It is not difficult to show that the permutation group $\sg{L_A,\varphi}$ of $A$ admits the 
complementary factorization $L_A \sg{\varphi}$. This group is called the \emph{skew product group} 
induced by $A$ and $\varphi$~\cite{CJT2016}.

Conversely, suppose that $G$ is any group admitting a complementary factorization 
$G=AY$, where $Y$ is a cyclic subgroup and it is core-free in $G$.
Fix a generator $y$ of $Y$. %Since $AY=YA$, 
\blue{Then} for every $a \in A$,  there is a unique element $b \in A$ 
and a unique number $j\in\mathbb{Z}_{|y|}$ such that $ya=b y^j$.  Define the mappings
$\varphi : A \to A$ and $\pi : A \to \mathbb{Z}_{|y|}$ by
\begin{equation}\label{eq:c-rule}
\varphi(a)=b~\text{and}~\pi(a)=j~\stackrel{\text{def}}{\iff}~ya=b y^j~
\text{for all}~a \in A.
\end{equation}
Then $\varphi$ and $\pi$ are well-defined, $\varphi$ is a skew-morphism of $A$,
and $\pi$ is the power function associated with $\varphi$~\cite[Proposition~3.1(a)]{CJT2016}.
\medskip

Regular Cayley maps, skew morphisms and skew product groups for a given 
infinite family of groups have been intensively investigated.
Generally speaking, this seems a challenging problem, because even for the cyclic groups a
complete classification of the skew morphisms is not at hand, apart from the celebrated classification
of regular Cayley maps~\cite{CT2014}, and a partial classification of the skew morphisms 
and the skew product groups~\cite{BACH2022,BJ2017,DH2019,HKZ2021,KN2011,KN2017}.
For the elementary abelian $p$-groups, a characterization of the skew product groups
and a complete classification of the regular Cayley maps can be found  in~\cite{DLY2022,DYL2022}.
For the dihedral groups, the regular Cayley maps have been classified~\cite{KK2021} and a 
characterization of the skew product groups was given
recently by the authors~\cite{HKK2022}.
For the non-abelian simple groups (or more explicitly, the monolithic groups),
and non-abelian characteristically simple groups,
the skew morphisms and skew product groups have been classified,
in contrast to the fact that not much is known
about the regular Cayley maps~\cite{BCV2022, CDL2022}.
\medskip

In this paper we shall continue the investigation of skew morphisms of cyclic groups.
A skew morphism $\varphi$ of a group $A$ is called \emph{smooth}
if the associated power function $\pi$ satisfies the following condition:
\begin{equation}\label{eq:smooth}
\pi(\varphi(a))\equiv\pi(a) \!\!\! \! \pmod {|\varphi|}~\text{for all}~a \in A.
\end{equation}
It is clear that every automorphism is smooth, however, the converse is not true in
general. Smooth skew morphisms were defined by Hu~\cite{H2012} and independently by
Bachrat\'y and Jajcay \cite{BJ2017} under the name of \emph{coset-preserving} skew morphisms.
The smooth skew morphisms of cyclic groups and dihedral groups have been classified by
Bachrat\'y and Jajcay~\cite{BJ2017} (see also \cite[Theorem 16]{HNWY2019} for
an alternative proof) and Wang et al.~\cite{WHYZ2019}, respectively.
To elaborate a subtler relationship between automorphisms and smooth skew
morphisms we need to introduce the concept of a \emph{reciprocal pair} of 
skew morphisms of cyclic groups. Suppose that $(\varphi,\tilde\varphi)$ is a pair of
skew morphisms $\varphi$ and $\tilde\varphi$ of the cyclic groups $\Z_m$ and
$\Z_n$, and $\pi$ and $\tilde\pi$ are the associated power functions 
of $\varphi$ and $\tilde\varphi$, respectively. Then $(\varphi,\tilde\varphi)$ is called
 \emph{reciprocal} if they satisfy the following conditions:
\begin{enumerate}[\rm(a)]
\item $|\varphi|$ divides $n$ and $|\tilde\varphi|$ divides $m$;
\item for all $x \in \Z_m$ and $y \in \Z_n$,
\begin{eqnarray*}
\pi(x) &\equiv& -\tilde\varphi^{-x}(-1) \!\!\!\! \pmod {|\varphi|}, \\
\tilde\pi(y) &\equiv& -\varphi^{-y}(-1) \!\!\!\! \pmod  {|\tilde\varphi|}.
\end{eqnarray*}
\end{enumerate}
It is shown in~\cite[Theorem 3.5]{FHNSW2019}  that, for fixed $m$ and $n$, the isomorphism classes
of regular dessins with complete bipartite underlying graphs $K_{m,n}$ are in one-to-one 
correspondence with the reciprocal pairs $(\varphi,\tilde\varphi)$ of skew morphisms of $\Z_m$ and
$\Z_n$, respectively; moreover, if one of the skew morphisms is an automorphism, then the other is 
necessarily smooth~\cite[Lemma 12]{HNWY2019}.

It was shown by Kov\'acs and Nedela~\cite[Theorem 6.3]{KN2011} that every skew morphism of the 
cyclic groups $\mathbb{Z}_n$ is an automorphism if and only if $n=4$ or $\gcd(n,\phi(n))=1$, 
where $\phi$ is Euler's totient function. Recently Bachrat\'y~\cite{BACH2022} showed that every skew morphism 
of $\Z_n$ is smooth whenever $n=pq$, $4p$, $8p$, $16p$ or $pqr$, 
where $p, q, r$ are distinct primes.  Our first theorem is the following generalization.

\begin{theorem}\label{main1}
Every skew morphism of the cyclic group $\Z_n$ is smooth if and only if $n=2^en_1$, where
$0 \le e \le 4$ and $n_1$ is an odd square-free number.
\end{theorem}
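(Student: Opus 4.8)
The plan is to prove the two directions separately, relying heavily on the known classification of smooth skew morphisms of cyclic groups due to Bachrat\'y and Jajcay~\cite{BJ2017}, together with structural results on the skew product group.

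\medskip

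\emph{The ``only if'' direction.} Suppose $n$ is not of the stated form, so either $n$ is divisible by $2^5=32$, or $n$ is divisible by $p^2$ for some odd prime $p$. The strategy is to exhibit, in each case, an explicit proper skew morphism of $\Z_n$ that is not smooth. For the case $p^2 \mid n$ with $p$ odd, I would first construct such a non-smooth skew morphism of $\Z_{p^2}$ itself: one can take a skew morphism whose power function is visibly non-constant on some orbit, for instance one arising from a non-abelian factorization $G = \Z_{p^2} Y$ with $|Y| = p$ where the action ``mixes'' the two levels of the $p$-adic filtration of $\Z_{p^2}$. Then I would lift it to $\Z_n$ by a standard product/extension construction (using $\Z_n \cong \Z_{p^2} \times \Z_{n/p^2}$ when $\gcd(p^2, n/p^2)=1$, and a more careful argument when $p^3 \mid n$): the key point is that smoothness is inherited by the ``$\Z_{p^2}$-component'' of the skew morphism, so a non-smooth piece forces the whole thing to be non-smooth. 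The case $32 \mid n$ is handled analogously, starting from an explicit non-smooth skew morphism of $\Z_{32}$; here the relevant obstruction is already visible in the classification of skew morphisms of $\Z_{2^e}$ for $e \ge 5$, and one checks directly that at least one of them violates~\eqref{eq:smooth}.

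\medskip

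\emph{The ``if'' direction.} Now assume $n = 2^e n_1$ with $0 \le e \le 4$ and $n_1$ odd and square-free. I must show \emph{every} skew morphism $\varphi$ of $\Z_n$ is smooth. The natural approach is induction on the number of prime divisors of $n$, combined with a reduction to prime-power and small-order situations. The main tool is the fact that for a skew morphism $\varphi$ of $\Z_n$, the kernel $\Ker\varphi = \{a \in \Z_n : \pi(a) = 1\}$ is a subgroup, and the restriction/quotient behaviour of $\varphi$ with respect to characteristic subgroups of $\Z_n$ is controlled: if $H$ is the unique subgroup of $\Z_n$ of a given order, then $H$ is $\varphi$-invariant, $\varphi$ induces a skew morphism $\bar\varphi$ on $\Z_n/H$, and (a suitable power of) $\varphi$ restricts to a skew morphism of $H$. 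Since every proper divisor of $n$ is again of the allowed form $2^{e'} m_1$ with $e' \le 4$ and $m_1$ odd square-free, the inductive hypothesis gives smoothness of both $\bar\varphi$ and the restricted morphism. The crux is then to glue these two pieces of smoothness into smoothness of $\varphi$ on all of $\Z_n$ — i.e.\ to show $\pi(\varphi(a)) \equiv \pi(a) \pmod{|\varphi|}$ knowing only the congruences modulo $|H|$-related and $|\Z_n/H|$-related moduli. This requires analysing $|\varphi|$ via the skew product group $\sg{L_{\Z_n}, \varphi}$ and its order, using that $|\varphi|$ divides a number built from $\phi(n)$ and hence (for $n$ of the allowed shape) has restricted prime factorization; a CRT-type argument then recovers the full congruence.

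\medskip

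\emph{The main obstacle.} I expect the genuine difficulty to be the base of the induction together with the $2$-part: showing that every skew morphism of $\Z_{2^e}$ for $e \le 4$, and more generally the interaction of the $2$-part $\Z_{2^e}$ with the odd square-free part, is smooth. For the odd square-free part alone, $\gcd(n_1, \phi(n_1))$ can be large, so skew morphisms there need not be automorphisms; one must invoke Bachrat\'y's results~\cite{BACH2022} on $\Z_{pq}$ and $\Z_{pqr}$ and push the argument to arbitrary square-free odd $n_1$ by a careful induction, controlling how the kernel and the periodicity of $\pi$ behave under adjoining a new prime. The $2^e$ factor with $3 \le e \le 4$ is delicate because $\Z_8$ and $\Z_{16}$ already admit proper (but, as we must verify, smooth) skew morphisms, so one cannot simply quote ``all skew morphisms are automorphisms''; instead one needs the explicit list of skew morphisms of $\Z_{2^e}$ for $e \le 4$ and a direct check of~\eqref{eq:smooth} for each, and then an argument that combining a smooth $2$-part with a smooth odd part yields a smooth morphism of the product — the non-triviality here being that a skew morphism of $\Z_n$ need not decompose as a direct product of skew morphisms of the Sylow components, so one works instead with the invariant-subgroup filtration and the order of $\varphi$ throughout.
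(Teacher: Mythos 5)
Your necessity direction is essentially the paper's: exhibit a non-smooth skew morphism of $\Z_{p^e}$ ($p$ odd, $e\ge 2$) or of $\Z_{2^e}$ ($e\ge 5$), and extend it to $\Z_n$ as $\alpha\times\id$ using the criterion for when a direct product of skew morphisms is again a skew morphism and the fact that $\alpha\times\psi$ is smooth iff both factors are. The only thing missing is the actual witness; the paper gets it for free from the classification of skew morphisms that are square roots of automorphisms (quadratic maps $\varphi(x)\equiv sx-\tfrac{x(x-1)n}{2k}$, whose power function $\pi(x)\equiv 1+2xw'\ell$ is visibly non-constant on an orbit). This gap is fillable. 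Note also that no ``more careful argument when $p^3\mid n$'' is needed: one always splits off the full $p$-part, $\Z_n\cong\Z_{p^e}\times\Z_{n/p^e}$ with coprime factors.

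The sufficiency direction, however, has a genuine gap. Your reduction hinges on taking the unique subgroup $H\le\Z_n$ of a given order, claiming $H$ is $\varphi$-invariant, restricting and quotienting, and then recombining by CRT. First, a skew morphism of $\Z_n$ need not leave an arbitrary characteristic subgroup invariant (only the kernel is guaranteed invariant when $A$ is abelian), so the restriction step is unjustified. Second, and more seriously, even granting invariance, smoothness of the quotient on $\Z_n/H$ plus smoothness of a restriction to $H$ does not recombine into smoothness of $\varphi$: the congruence $\pi(\varphi(a))\equiv\pi(a)$ must hold modulo $|\varphi|$, and one needs $|\varphi|$ to be the lcm of the two ``partial'' orders with the two partial congruences covering it. The paper achieves this not with a subgroup/quotient pair but with \emph{two normal subgroups} $M,N$ of the skew product group $G=A\sg{\varphi}$ whose orbit systems correspond to subgroups $B,C\le A$ with $B\cap C=1$; it is exactly the condition $B\cap C=1$ that forces $\gcd(|Y\cap K|,|Y\cap L|)=1$ and hence $|\varphi|=\lcm(|\varphi_M|,|\varphi_N|)$ (Lemma~\ref{L4}). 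Third, your plan has no provision for the case where no such pair exists, namely when $F(G)=O_p(G)$ for a single prime $p$. This is the hardest case of the induction: there the paper shows $O_p(G)=A_p$ (whence $A\lhd G$ and $\varphi$ is an automorphism) or $O_p(G)=A_p\times Y^*\cong\Z_p^2$, and in the latter case uses Hall subgroups, $C_G(F(G))\le F(G)$, and Maschke's theorem to show the Hall $p'$-subgroup is abelian, forcing $Y\le O_p(G)$ and $|\varphi|=p$. Without an argument for this case the induction does not close, so the proposal as written does not yield a proof.
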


The proof of Theorem~\ref{main1} relies on some preliminary results from group theory as well as 
the theory of skew morphisms, which will be collected in next section.

\medskip

Conder et al.~\cite{CJT2016} classified the non-cyclic abelian groups with the property
that all of their skew morphisms are automorphisms. It was shown that these are precisely 
the elementary abelian $2$-groups~\cite[Theorem~7.5]{CJT2016}. 
In this paper we also propose the investigation of non-cyclic abelian groups 
with the property that all of their skew morphisms are smooth.

\begin{problem}\label{PROB}
Classify the finite non-cyclic abelian groups which underly only smooth skew morphisms.
\end{problem}

In Section~\ref{sec:main2} we give the following partial answer.
\begin{theorem}\label{main2}
Let $A$ be a non-cyclic  abelian group of order $n=2^fn_1$, where $f\geq 0$ and $n_1$ is odd.
If $A$ underlies only smooth skew morphisms, then $n_1$ is square-free, and the Sylow $2$-subgroup
of $A$ contains no direct factors isomorphic to $\Z_{2^e}$ $(e\geq 5)$.
\end{theorem}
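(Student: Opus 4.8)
The plan is to reduce the non-cyclic case to the cyclic one via subgroup and quotient constructions, exploiting the fact that the smoothness hypothesis is inherited by suitable sections of $A$. Concretely, I would first establish a transfer lemma: if $A$ underlies only smooth skew morphisms and $A = B \times C$, then $B$ also underlies only smooth skew morphisms. The idea is that any skew morphism $\psi$ of $B$ can be extended to a skew morphism $\varphi$ of $A$ by letting $\varphi$ act as $\psi$ on the $B$-component and, say, as the identity (or as a compatible automorphism) on the $C$-component, after checking that the power function of the product is controlled by that of $\psi$; since $\varphi$ must be smooth, so must $\psi$. A similar statement should hold for characteristic subgroups and for quotients $A/K$ with $K$ characteristic, using the projection/lifting of skew morphisms. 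This machinery is where most of the care is needed, because skew morphisms do not in general restrict to subgroups — one must use that the kernel of a skew morphism of an abelian group is itself a subgroup of bounded index and arrange the splitting accordingly.

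Granting the transfer lemma, the argument proceeds by contradiction along two fronts. For the odd part: write $A = A_2 \times A_{2'}$ with $A_{2'}$ the odd-order Hall subgroup. If $n_1$ is not square-free, then $A_{2'}$ has a direct factor $\Z_{p^2}$ for some odd prime $p$, hence (by transfer) $\Z_{p^2}$ underlies only smooth skew morphisms. But by Theorem~\ref{main1}, $\Z_{p^2}$ with $p$ odd does \emph{not} have this property — in fact the Kov\'acs--Nedela criterion or Bachrat\'y--Jajcay's classification exhibits a non-smooth skew morphism of $\Z_{p^2}$ whenever $p \mid \phi(p^2) = p(p-1)$, which always holds. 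This contradiction forces $n_1$ square-free. For the 2-part: if the Sylow 2-subgroup of $A$ has a direct factor $\Z_{2^e}$ with $e \ge 5$, then again by transfer $\Z_{2^e}$ underlies only smooth skew morphisms, contradicting Theorem~\ref{main1}, which requires $e \le 4$. Both contradictions are then immediate consequences of Theorem~\ref{main1}, so the whole weight of the proof sits on the transfer lemma.

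The main obstacle I anticipate is precisely making the transfer lemma rigorous: constructing, from a non-smooth skew morphism of a direct factor $B$, a genuinely non-smooth skew morphism of $A$. The naive ``act as identity on $C$'' extension need not be a skew morphism unless the power function $\pi_B$ of the given skew morphism of $B$ is compatible — one needs $\varphi(bc) = \varphi(b)\,\varphi^{\pi(b)}(c)$, and if $\varphi$ fixes $C$ pointwise this reads $\varphi(bc) = \varphi(b)c$, which does force $\pi(b) \equiv 1$ on the $C$-part but is consistent as long as we are careful that $C$ lies in the kernel. The cleanest route is probably to pass through skew product groups: realize the non-smooth skew morphism of $B$ inside a skew product group $G_B = L_B\langle\varphi_B\rangle$, then form $G_B \times C$ acting on $B \times C$, and read off the induced skew morphism of $A$ via the correspondence in~\eqref{eq:c-rule}, checking that the smoothness condition~\eqref{eq:smooth} for the induced map is equivalent to that for $\varphi_B$. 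Once this equivalence is verified — which is a finite, if slightly fiddly, computation with power functions — the theorem follows.
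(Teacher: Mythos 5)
Your overall strategy---reduce to a direct factor via the extension $\alpha\times\mathrm{id}$ and then invoke the cyclic classification---is essentially the paper's, and the ``transfer lemma'' you are worried about is exactly the paper's Lemma~\ref{direct} combined with Proposition~\ref{Z}: since $\gcd(|\alpha|,|\mathrm{id}_C|)=1$, the compatibility condition in Proposition~\ref{Z}(a) is vacuous, so $\alpha\times\mathrm{id}_C$ is automatically a skew morphism of $B\times C$, and by Lemma~\ref{direct} it is smooth if and only if $\alpha$ is. So that machinery is sound, and your treatment of the $2$-part (a direct factor $\Z_{2^e}$ with $e\geq 5$) goes through exactly as in the paper.

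The genuine gap is in the odd part. You assert that if $n_1$ is not square-free then $A_{2'}$ has a direct factor isomorphic to $\Z_{p^2}$ for some odd prime $p$. That is false: if the Sylow $p$-subgroup of $A$ is elementary abelian of rank at least $2$ (for instance $A=\Z_p\times\Z_p$ itself), then $p^2\mid n_1$ but $A$ has no cyclic direct factor of order $p^2$, and your reduction to Theorem~\ref{main1}---which concerns only \emph{cyclic} groups---yields nothing. The correct dichotomy is: either $A$ has a direct factor $\Z_{p^i}$ with $i\geq 2$, where your argument works, or $A$ has a direct factor $\Z_p\times\Z_p$. The second case requires a separate input, namely a non-smooth skew morphism of $\Z_p\times\Z_p$. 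The paper supplies this from the Conder--Jajcay--Tucker classification of the proper skew morphisms of $\Z_p\times\Z_p$ (Proposition~\ref{NSE}), from which one computes $\pi(\varphi(x))\equiv 1+rnk\not\equiv 1+nk\equiv\pi(x)\pmod{pk}$, so \emph{every} proper skew morphism of $\Z_p\times\Z_p$ is non-smooth (Lemma~\ref{NON2}); such proper skew morphisms exist for odd $p$ by \cite[Theorem~7.5]{CJT2016}. Without handling this elementary abelian case your proof is incomplete.
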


%--------------------------------------------------------------------------------------------------------------%
\section{Preliminaries}\label{sec:known}

\subsection{Group theory}
All groups in this paper will be finite.  We denote the identity element of a group $G$
by $1_G$ and its order by $|G|$. The order of an element $g \in G$ is denoted by $|g|$.
Let $H$ be a subgroup of a group $G$. The \emph{core of $H$ in $G$} is
the largest normal subgroup of $G$ contained in $H$; in the case when this is trivial,
$H$ is called \emph{core-free} in $G$. Moreover, $C_G(H)$ denotes the \textit{centralizer} of $H$ in $G$. 
If $p$ is prime, then the largest normal $p$-subgroup of $G$ is denoted by $O_{p}(G)$. 
The largest normal nilpotent subgroup of $G$ is the \emph{Fitting subgroup} of $G$, denoted by
$F(G)$.
\begin{proposition}[{\rm\cite[7.4.3, 7.4.7]{Scott1964}}]\label{FITTING}
Let $G$ be a finite group, and $\wp=\{p_1,\ldots,p_k\}$ the set of all prime divisors of $|G|$. Then
\begin{enumerate}[\rm(a)]
\item $F(G)=\prod_{i=1}^{k}O_{p_i}(G)$. 
\item If $G$ is a solvable group, then $C_G(F(G))\leq F(G)$.
\end{enumerate}
\end{proposition}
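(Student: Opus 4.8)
The plan is to derive both statements directly from the definition of $F(G)$ as the largest normal nilpotent subgroup of $G$, together with two classical facts: \emph{Fitting's theorem}, that the product of two normal nilpotent subgroups of a finite group is again nilpotent (so that a unique largest normal nilpotent subgroup exists), and the fact that a finite nilpotent group is the internal direct product of its Sylow subgroups, each of which is characteristic.

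For part (a), I would first establish $\prod_i O_{p_i}(G) \le F(G)$. Each $O_{p_i}(G)$ is a normal $p_i$-subgroup, hence nilpotent, and therefore contained in $F(G)$; since the $O_{p_i}(G)$ are normal of pairwise coprime orders, they intersect trivially and commute elementwise, so their product is an internal direct product, which is nilpotent and normal in $G$, and thus lies in $F(G)$. For the reverse inclusion I would use that $F(G)$, being nilpotent, is the direct product of its Sylow subgroups. Each Sylow $p_i$-subgroup of $F(G)$ is the unique one, hence characteristic in $F(G)$; as $F(G) \trianglelefteq G$, it is then normal in $G$ and so contained in $O_{p_i}(G)$. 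Multiplying over all $i$ gives $F(G) \le \prod_i O_{p_i}(G)$, and equality follows.

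For part (b), set $F = F(G)$ and $C = C_G(F)$. Since $F$ is normal in $G$, so is $C$; moreover $C \cap F = Z(F)$ and $Z(F) \le Z(C)$, the latter because $Z(F) \le C$ and every element of $C$ centralizes $F \supseteq Z(F)$. Arguing by contradiction, suppose $C \not\le F$, so that $CF/F$ is a nontrivial normal subgroup of the solvable quotient $G/F$. I would then choose a minimal normal subgroup $N/F$ of $G/F$ contained in $CF/F$; by solvability it is elementary abelian. Put $N_0 = C \cap N$. The Dedekind modular law (valid since $F \le N \le CF$) gives $N = N_0 F$, so that $N_0/(N_0 \cap F) \cong N/F$ is elementary abelian, while $N_0 \cap F = C \cap F = Z(F)$. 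Because $N_0 \le C$ and $Z(F) \le Z(C)$, the subgroup $Z(F)$ is central in $N_0$; combined with $[N_0,N_0] \le Z(F)$ this shows $N_0$ is nilpotent of class at most $2$. Finally $N_0 = C \cap N$ is an intersection of normal subgroups, hence normal in $G$; being nilpotent and normal it lies in $F$, forcing $N_0 \le C \cap F = Z(F)$ and thus $N = N_0 F = F$, contradicting $N/F \neq 1$.

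The routine ingredients---Fitting's theorem and the Sylow decomposition of a nilpotent group---are standard, so the only genuine work is the extraction in part (b) of a nilpotent normal subgroup sitting strictly above $Z(F)$ inside $C_G(F)$. The step I would watch most carefully is the modular-law identity $N = N_0 F$ together with the verification that $N_0$ is nilpotent; this is precisely where solvability of $G$ enters, through the elementary abelian (hence abelian) nature of the chief factor $N/F$, which forces the required central commutator containment.
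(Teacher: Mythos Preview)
Your argument is correct. Both parts are handled by standard techniques: the Sylow decomposition of a finite nilpotent group for (a), and the extraction of a nilpotent normal subgroup above $Z(F)$ inside $C_G(F)$ via a chief factor for (b). The only place one might quibble is the application of the modular law, but since $F \le N \le CF$ you do get $N = N \cap CF = F(C \cap N) = FN_0$, and the rest goes through as you wrote.

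Note, however, that the paper does not supply its own proof of this proposition: it is simply quoted from Scott's textbook (items 7.4.3 and 7.4.7) as a background result, in the same way that the Hall theorems and the Sylow tower property are quoted. So there is nothing in the paper to compare your proof against; you have reconstructed a standard textbook argument for a result the authors take as known.
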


A group $G$ is called \emph{supersolvable} if there is a sequence
\[
1=N_0 < N_1 < \cdots < N_k=G
\]
 of normal subgroups of $G$ such that $|N_i/N_{i-1}|$ is a prime for every $i$, $1 \le i \le k$.
The following results are well known..

\begin{proposition}[{\rm \cite[13.2.9, 13.3.1]{Scott1964}}]\label{AB}
Suppose that a group $G$ has a factorization $G=AB$.
\begin{enumerate}[{\rm (a)}]
\item If both $A$ and $B$ are nilpotent, then $G$ is solvable.
\item If both $A$ and $B$ are cyclic, then $G$ is supersolvable.
\end{enumerate}
\end{proposition}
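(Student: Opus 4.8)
The plan is to prove the two assertions separately, recognising (a) as the Kegel--Wielandt theorem on products of nilpotent groups and deducing (b) from (a) together with It\^o's theorem on products of abelian groups.

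For (a) I would argue by induction on $|G|$, examining a minimal counterexample $G=AB$ with $A,B$ nilpotent and $G$ non-solvable. The key structural observation is that the factorization passes to quotients: for any $1\neq N\trianglelefteq G$ one has $G/N=(AN/N)(BN/N)$, a product of two nilpotent subgroups (each being an epimorphic image of $A$ or $B$), so $G/N$ is solvable by the inductive hypothesis. It follows at once that $G$ has a unique minimal normal subgroup $N$ (two distinct ones $N_1,N_2$ would give $G\hookrightarrow G/N_1\times G/N_2$, forcing $G$ solvable), and that $N$ cannot be solvable, since $G/N$ solvable would make $G$ solvable. Hence $N$ is a direct product of isomorphic non-abelian simple groups, and the solvable radical of $G$ is trivial.

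The heart of the argument---and the step I expect to be the main obstacle---is to rule out such a non-solvable minimal normal subgroup. Here one exploits the interaction between the factorization and Sylow theory: since $A$ and $B$ are nilpotent, each splits as the direct product of its Sylow subgroups, which yields factorizations of $G$ by Hall-type subgroups and gives good control over intersections of the form $U=(U\cap A)(U\cap B)$ for suitable subgroups $U$. A careful analysis of how the nilpotent factors can cover the chief factor $N$ then contradicts its non-abelian composition. This is precisely the technical core of Kegel's original proof, and I would follow that route rather than seek a shortcut.

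For (b), note first that cyclic groups are nilpotent, so $G$ is solvable by part (a), and that cyclic groups are abelian, so It\^o's theorem gives that $G$ is metabelian. Metabelianness alone does not suffice---the metabelian product $A_4=V_4C_3$ has a chief factor of order $4$ and is not supersolvable---so the cyclicity of both factors must genuinely be used. I would induct on $|G|$ once more. A minimal normal subgroup $N$ is an elementary abelian $p$-group because $G$ is solvable, and $G/N=(AN/N)(BN/N)$ is again a product of two cyclic subgroups, hence supersolvable by induction; it therefore suffices to prove that $|N|$ is prime. This last point is the delicate one: I would establish it by analysing the faithful action of $G$ on the chief factor $N$, tracing how the cyclic subgroups $A$ and $B$ meet $N$ and its centralizer, with Proposition~\ref{FITTING} locating $N$ inside $F(G)$ and bounding $C_G(F(G))$. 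The $A_4$ example marks exactly the boundary where commutativity of a factor is too weak and cyclicity is required.
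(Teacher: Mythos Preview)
The paper does not prove this proposition at all: it is stated as a quotation from Scott's textbook (references 13.2.9 and 13.3.1) and is used purely as background. So there is no ``paper's own proof'' against which your attempt can be compared; the authors simply invoke the results as known.

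As a standalone sketch, your outline for (a) correctly isolates the standard reduction---minimal counterexample, passage to quotients, unique non-solvable minimal normal subgroup---but then defers the genuinely hard step (``I would follow Kegel's route''). That step is the entire content of the Kegel--Wielandt theorem; what you have written so far applies verbatim to any factorization $G=AB$ by arbitrary solvable subgroups, a setting in which the conclusion is famously false without deep input. So the sketch is not yet a proof. For (b), invoking It\^o and then inducting on $|G|$ is the right shape, but the final claim that a minimal normal subgroup $N$ must have prime order is again the whole point, and your appeal to Proposition~\ref{FITTING} does not by itself force $|N|=p$. One clean way to finish is to note that in a metabelian group the derived subgroup $G'$ is abelian and normal, and each cyclic factor has cyclic image in $G/G'$, so $G/G'$ is abelian with two cyclic generators; then analyse the action of the cyclic factors on $G'$ to see that chief factors inside $G'$ are one-dimensional. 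Alternatively, simply cite the textbook result as the paper does.
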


The following fact is often referred to as the \textit{Sylow tower property} of supersolvable groups.

\begin{proposition}[{\rm \cite[7.2.19]{Scott1964}}]\label{STT}
Let $G$ be supersolvable group, and let $P_i$ be a Sylow $p_i$-subgroup of $G$, 
where $\wp=\{p_1,p_2,\ldots,p_r\}$ constitutes the set of prime divisors of $|G|$. If
the prime divisors are ordered by $p_i>p_{i+1}$ for all $i$, then for each  $k$, 
the product $\prod_{i=1}^kP_i$ 
is a normal subgroup of $G$.
\end{proposition}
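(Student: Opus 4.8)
The plan is to reduce the statement to a single key fact, proved by induction on $|G|$: \emph{in a supersolvable group the Sylow subgroup belonging to the largest prime divisor is normal}. Granting this, I would write $P_1\triangleleft G$ and pass to the quotient $\bar G=G/P_1$. Since a quotient of a supersolvable group is supersolvable, $\bar G$ is supersolvable with prime divisors $p_2>\cdots>p_r$, and its Sylow $p_i$-subgroups are precisely the images $\bar P_i=P_iP_1/P_1$. By induction on the group order the partial products $\prod_{i=2}^{k}\bar P_i$ are normal in $\bar G$ for every $k$; pulling these back along the projection $G\to\bar G$ yields exactly the subgroups $\prod_{i=1}^{k}P_i$, which are therefore subgroups and normal in $G$. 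Together with the case $k=1$, namely $P_1\triangleleft G$, this delivers the full Sylow tower.

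It then remains to prove the key fact, again by induction on $|G|$, the prime-power case being trivial. Using the defining chain $1=N_0<N_1<\cdots<N_k=G$, I would fix the normal subgroup $N:=N_1$ of prime order $q$; the images $N_i/N$ exhibit $G/N$ as supersolvable. Let $P$ be a Sylow $p_1$-subgroup, where $p_1$ is the largest prime divisor of $|G|$. If $q=p_1$, then $N$ is a normal $p_1$-group, so $N\le P$; either $P=N$ is already normal, or $p_1$ still divides $|G/N|$ and remains its largest prime, whence the inductive hypothesis makes $P/N$ normal in $G/N$, and so $P\triangleleft G$.

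The decisive case is $q\neq p_1$, so that $q<p_1$ by the chosen ordering. Here $p_1$ remains the largest prime of $G/N$, so by induction $PN/N$ is normal in $G/N$, i.e. $PN\triangleleft G$. Since $P$ is a Sylow $p_1$-subgroup of the normal subgroup $PN$, the Frattini argument gives $G=PN\,N_G(P)=N\,N_G(P)$, whence the index $[G:N_G(P)]=[N:N\cap N_G(P)]$ divides $q$. But this index equals the number $n_{p_1}$ of Sylow $p_1$-subgroups, which satisfies $n_{p_1}\equiv 1\pmod{p_1}$; as $q$ is a prime strictly smaller than $p_1$, the only divisor of $q$ congruent to $1$ modulo $p_1$ is $1$, forcing $n_{p_1}=1$ and hence $P\triangleleft G$.

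The one genuinely delicate point, and the place where the hypothesis that the primes are ordered \emph{decreasingly} is indispensable, is the Sylow-counting step in the last case: everything hinges on the strict inequality $q<p_1$, which rules out $n_{p_1}=q$ and collapses the count to $1$. The remaining ingredients—heredity of supersolvability under quotients, the bijective correspondence of Sylow subgroups and of normal subgroups under $G\to G/N$, and the Frattini argument—are entirely standard, so I expect no further obstruction.
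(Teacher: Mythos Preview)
Your argument is correct and is essentially the standard textbook proof of the Sylow tower property for supersolvable groups. Note, however, that the paper does not give its own proof of this proposition: it is quoted as a known result with a citation to Scott's \emph{Group Theory} \cite[7.2.19]{Scott1964}, so there is nothing in the paper to compare your proof against. Your reduction to the single key fact (normality of the Sylow subgroup for the largest prime), followed by the Frattini argument combined with the Sylow congruence $n_{p_1}\equiv 1\pmod{p_1}$ to force $n_{p_1}=1$, is exactly the classical route and goes through without difficulty.
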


 Let $\wp$ be a set of primes, a positive integer $n$ is a \textit{$\wp$-number} 
 if every prime divisor of $n$ belongs to $\wp$. If no prime divisors of $n$ lie in $\wp$, then $n$ 
 will be called a $\wp'$-number.  A positive divisor $d$ of $n$ is a \textit{Hall divisor} 
  if $\gcd(d,n/d)=1$, that is, there exists some set $\wp$ of primes such
 that $d$ is a $\wp$-number while $n/d$ is a $\wp'$-number.
 A group  is a \textit{$\wp$-group} if its order is a $\wp$-number. A subgroup $H$ of a group $G$
  is a \textit{Hall subgroup} if $|H|$ is a Hall divisor of $|G|$. 
  Thus, $H$ is a Hall subgroup of $G$ if and only if,
   for some set $\wp$ of primes, $H$ is a $\wp$-subgroup of $G$ and $|G:H|$ is
  a $\wp'$-number, in which case $H$ is also called a \textit{Hall $\wp$-subgroup} of $G$.
  In particular, if $\wp=\{p\}$ consists of a single prime, then a Hall $\wp$-subgroup is 
  indeed a \textit{Sylow $p$-subgroup}. 

\begin{proposition}[{\rm \cite[Kapitel VI, Satz 1.8]{Huppert1967}}]\label{HALL}
Let $G$ be a solvable group and let $\wp$ be a set consisting of some prime 
divisors of $|G|$. Then the following hold:
\begin{enumerate}[\rm(a)]
\item$G$ contains a Hall $\wp$-subgroup.
\item All Hall $\wp$-subgroups are conjugate in $G$. 
\item Every $\wp$-subgroup of $G$ is contained in some Hall $\wp$-subgroup of $G$.
\end{enumerate}  
\end{proposition}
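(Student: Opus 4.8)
The plan is to prove all three assertions simultaneously by induction on $|G|$, exploiting the two features that make solvable groups tractable: a solvable group has a minimal normal subgroup that is an elementary abelian $r$-group for some prime $r$, and the Schur--Zassenhaus theorem supplies complements (and their conjugacy) for normal Hall subgroups. Write $|G|=mn$, where $m$ is the $\wp$-part and $n$ the $\wp'$-part of $|G|$, so that $\gcd(m,n)=1$ and a Hall $\wp$-subgroup is exactly a subgroup of order $m$. If $n=1$ there is nothing to prove, so assume $n>1$ and fix a minimal normal subgroup $N\trianglelefteq G$; by solvability $N$ is an elementary abelian $r$-group. The quotient $G/N$ is again solvable of strictly smaller order, so the inductive hypothesis applies to it and to every proper subgroup of $G$.

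For existence (a) I would split on whether $r\in\wp$. If $r\in\wp$, then $|N|$ divides $m$, and induction gives a Hall $\wp$-subgroup $H/N$ of $G/N$ of order $m/|N|$, whose preimage $H$ has order $m$. If $r\notin\wp$, then $|N|$ divides $n$, and induction gives a Hall $\wp$-subgroup $H/N$ of $G/N$ of order $m$, so $|H|=m|N|$. If $H$ is a proper subgroup of $G$, induction applied to $H$ yields a subgroup of order $m$, which is the desired Hall $\wp$-subgroup. The only remaining case is $H=G$, which forces $|N|=n$; then $N$ is a normal Hall $\wp'$-subgroup, and Schur--Zassenhaus supplies a complement of order $m$.

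To treat containment (c), let $U$ be any $\wp$-subgroup and $H$ a Hall $\wp$-subgroup, the aim being $U\le H^g$ for some $g\in G$. If $r\in\wp$, then $N\le H$ (since $NH$ is a $\wp$-group containing the order-maximal $\wp$-subgroup $H$, so $NH=H$); hence $H/N$ is a Hall $\wp$-subgroup and $UN/N$ a $\wp$-subgroup of $G/N$, and induction there gives $UN/N\le(H/N)^{\bar g}$, whence $U\le H^g$. If $r\notin\wp$, induction in $G/N$ yields $UN\le H^gN$; when $H^gN<G$ one applies induction inside the proper subgroup $H^gN$, and when $H^gN=G$ one checks via Dedekind's modular law that both $U$ and $H^g\cap UN$ are complements of $N$ in $UN$, so the Schur--Zassenhaus conjugacy (available since $UN$ is solvable) places $U$ inside a conjugate of $H^g$. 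Finally, conjugacy (b) is immediate from (c): applying (c) to a second Hall $\wp$-subgroup $H'$ gives $H'\le H^g$, and equality of orders forces $H'=H^g$.

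The main obstacle is the terminal configuration in which $N$ is a normal Hall $\wp'$-subgroup and no further reduction through a proper quotient or subgroup is possible; here the whole argument rests on the Schur--Zassenhaus theorem, whose conjugacy assertion is precisely what drives parts (b) and (c). I would emphasise that solvability is used twice in an essential way: first to guarantee that minimal normal subgroups are abelian, so the induction can be seeded, and second so that the pertinent instance of Schur--Zassenhaus always has a solvable factor and therefore requires no appeal to the Feit--Thompson odd-order theorem.
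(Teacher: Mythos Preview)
Your argument is correct and is essentially the classical proof of Hall's theorem (induction on $|G|$ via a minimal normal subgroup, with Schur--Zassenhaus handling the terminal case where the minimal normal subgroup is already a normal Hall $\wp'$-subgroup). The paper itself does not prove this proposition at all: it simply records the statement and cites Huppert~\cite[Kapitel~VI, Satz~1.8]{Huppert1967}, using it later as a black box in the proof of Theorem~\ref{main1}. What you have written is, up to minor organisational choices, the proof one finds in Huppert (and in Hall's original 1928 paper), so there is no substantive difference to discuss---only that you have supplied a proof where the paper deliberately omits one.
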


If $K \le \aut(G)$ and $H \le G$, then $H$ is said to be \textit{$K$-invariant} if
$\sigma(H)=H$ for every $\sigma \in K$. We shall need the following simple lemma.

\begin{lemma}\label{MASCHKE}
Let $N \cong \Z_p^2$ for a prime $p$ and let $K \le \aut(N)$ such that $|K|$ is
coprime to $p$ and $N$ contains a $K$-invariant subgroup of order $p$.
Then $K$ is isomorphic to a subgroup of $\Z_{p-1}^2$.
\end{lemma}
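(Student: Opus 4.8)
The plan is to exploit Maschke's theorem to complement the given invariant line, and then bound the action on each one-dimensional summand. Write $N \cong \F_p^2$ as a two-dimensional vector space over $\F_p$, and let $V \le N$ be the $K$-invariant subgroup of order $p$, viewed as a one-dimensional $K$-submodule. Since $|K|$ is coprime to $p = \operatorname{char}\F_p$, the group algebra $\F_p[K]$ is semisimple (Maschke's theorem), so $N$ decomposes as a direct sum $N = V \oplus W$ of $K$-submodules, where $W$ necessarily also has order $p$. Thus $K$ acts on $N$ preserving the direct sum decomposition $V \oplus W$ into two lines.

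Next I would observe that the action of $K$ on a one-dimensional $\F_p$-space is, by definition, given by a homomorphism into $\aut(\Z_p) \cong \Z_{p-1}$, namely each element of $K$ acts as multiplication by a scalar in $\F_p^\times$. Hence restriction to $V$ and to $W$ gives two homomorphisms $\rho_V \colon K \to \aut(V) \cong \Z_{p-1}$ and $\rho_W \colon K \to \aut(W) \cong \Z_{p-1}$. Because $K$ preserves the decomposition $N = V \oplus W$, an element of $K$ is determined by its pair of scalars on $V$ and $W$; in other words the combined map $\rho = (\rho_V, \rho_W) \colon K \to \aut(V) \times \aut(W) \cong \Z_{p-1}^2$ is injective. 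Therefore $K$ embeds into $\Z_{p-1}^2$, as claimed.

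The only point requiring a little care is the applicability of Maschke's theorem: one needs the coprimality hypothesis $\gcd(|K|,p)=1$ precisely to invert $|K|$ in $\F_p$ and thereby produce a $K$-equivariant projection onto $V$. Once the decomposition $N = V \oplus W$ is in hand, the rest is a formal matter, since a scalar action on a line is exactly an element of $\Z_{p-1}$, and faithfulness of the permutation action of $K \le \aut(N)$ forces $(\rho_V,\rho_W)$ to be injective. I do not expect any genuine obstacle here; the statement is essentially a packaging of Maschke plus the structure of $\aut(\Z_p)$.
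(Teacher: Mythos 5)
Your proposal is correct and follows essentially the same route as the paper: apply Maschke's theorem (using $\gcd(|K|,p)=1$) to split off a $K$-invariant complement to the given invariant line, and then embed $K$ into $\aut(N_1)\times\aut(N_2)\cong\Z_{p-1}^2$ via its scalar actions on the two summands. The paper's proof is just a terser version of the same argument, so there is nothing to add.
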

\begin{proof}
Since $|K|$ is coprime to $p$, by Maschke Theorem (see~\cite[12.1.2]{Scott1964}) we have
 $N=N_1 \times N_2$, where $N_1$ and $N_2$ are $K$-invariant subgroups of
  $N$ such that $N_1\cong N_2\cong\Z_p$. Thus, $K\leq\aut(N_1)\times\aut(N_2)\cong\Z_{p-1}^2$.
\end{proof}

Suppose that $G$ acts on a finite set $\Omega$. For an element $x \in \Omega$, we
denote by $G_x$ the \emph{stabilizer} of $x$ in $G$, by $\orb_G(x)$
the \emph{$G$-orbit} containing $x$, and by $\orb_G(\Omega)$ the set of all
$G$-orbits. We denote by $\sym(\Omega)$ the symmetric group consisting
 of all permutation of $\Omega$, and by $\mathrm{id}_\Omega$ the identity permutation.
The following statement is known. It will be used a couple of times in the next 
section, hence we record it here.

\begin{lemma}\label{orbits}
Let $G \le \sym(\Omega)$ be a group containing a regular abelian subgroup $A$,
and suppose that $N \lhd G$. Then
$\orb_N(\Omega)=\orb_B(\Omega)$ for some subgroup $B \le A$.
\end{lemma}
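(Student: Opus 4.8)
The plan is to use the fact that $A$ is regular and abelian to identify the $N$-orbit of the identity $1_G$ (viewing $\Omega$ with a fixed base point) with a subgroup of $A$, and then transport this description across all of $\Omega$ using the regular action of $A$. First I would fix a point $\omega_0 \in \Omega$ and use the regularity of $A$ to identify $\Omega$ with $A$ via $a \mapsto a(\omega_0)$; under this identification $A$ acts on itself by left translation. Set $B = \{a \in A : a(\omega_0) \in \orb_N(\omega_0)\}$, i.e. $B$ is the image of $\orb_N(\omega_0)$ under the identification $\Omega \cong A$. The key claim is that $B$ is a subgroup of $A$ and that $\orb_N(\Omega) = \orb_B(\Omega)$ (where $B$ acts on $\Omega$ by restriction of the $A$-action).

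To see that $B$ is a subgroup, I would argue as follows. Take $a, a' \in B$, so there are $n, n' \in N$ with $n(\omega_0) = a(\omega_0)$ and $n'(\omega_0) = a'(\omega_0)$. Since $N \lhd G$ and $a \in A \le G$, the conjugate $a^{-1} n' a$ lies in $N$; applying it to $\omega_0$ and using $a(\omega_0) = n(\omega_0)$ one computes $a^{-1}n'a(\omega_0) = a^{-1}n'n(\omega_0)$, and since $a, n, n'$ all eventually land in the $A$-coordinate one checks this equals $(a^{-1}a'a)(\omega_0)$ in the translation picture; because $A$ is abelian, $a^{-1}a'a = a'$, but the point is that $a^{-1}n'a \in N$ witnesses that the element of $A$ carrying $\omega_0$ to $n'n(\omega_0)$ lies in $B$, and by regularity that element is exactly $a'a$ (abelian, so order doesn't matter). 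Hence $a'a \in B$; similarly inverses lie in $B$, so $B \le A$. The main subtlety here — and the step I expect to require the most care — is bookkeeping the two different ways $G$ and $N$ act versus the way $A$ translates, making sure the normality of $N$ is used correctly to turn "$N$ moves $\omega_0$ to a translate" into "$N$ moves an arbitrary point to the corresponding translate."

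Once $B$ is known to be a subgroup, I would show $\orb_N(\omega) = \orb_B(\omega)$ for every $\omega \in \Omega$. Write $\omega = a(\omega_0)$ for the unique $a \in A$. For the inclusion $\orb_B(\omega) \subseteq \orb_N(\omega)$: if $b \in B$ then by definition $b(\omega_0) = n(\omega_0)$ for some $n \in N$, and then $b(\omega) = ba(\omega_0) = ab(\omega_0) = an(\omega_0) = (a n a^{-1}) a(\omega_0) = (ana^{-1})(\omega)$ with $ana^{-1} \in N$, so $b(\omega) \in \orb_N(\omega)$. For the reverse inclusion $\orb_N(\omega) \subseteq \orb_B(\omega)$: given $n \in N$, the point $n(\omega) = na(\omega_0) = (nan^{-1})(n(\omega_0))$; writing $n(\omega_0) = b_0(\omega_0)$ with $b_0 \in B$ and $nan^{-1} \in N$ so that $(nan^{-1})(\omega_0) = b_1(\omega_0)$ with $b_1 \in B$, a short computation in the translation model (again using that $A$ is abelian, so translation by $b_1$ followed by translation by $b_0$ equals translation by $b_0 b_1 \in B$) shows $n(\omega) \in \orb_B(\omega)$. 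Thus the orbit partitions coincide, i.e. $\orb_N(\Omega) = \orb_B(\Omega)$, completing the proof.
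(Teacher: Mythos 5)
Your strategy is viable and, in substance, reconstructs by hand what the paper gets more quickly: the paper observes that $\orb_N(\Omega)$ is a block system for $G$, lets $K$ be the kernel of the action of $G$ on this block system, and takes $B=K\cap A$; your $B$ (the preimage in $A$ of the block through the base point) is that same subgroup. However, two of your computations apply the normality of $N$ in the wrong direction, and as written those steps fail. In the reverse inclusion $\orb_N(\omega)\subseteq\orb_B(\omega)$ you factor $na=(nan^{-1})n$ and assert $nan^{-1}\in N$. This is false in general: $nan^{-1}$ is a conjugate of $a\in A$, not of an element of $N$, and $a$ need not lie in $N$. The correct factorization is $na=a(a^{-1}na)$ with $a^{-1}na\in N$ because $N\lhd G$; then $n(\omega)=na(\omega_0)=a\bigl((a^{-1}na)(\omega_0)\bigr)=a(b(\omega_0))=ba(\omega_0)=b(\omega)$ for some $b\in B$, using that $A$ is abelian, which is what you want.

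The closure argument for $B$ has the same defect. From $a^{-1}n'a(\omega_0)=a^{-1}n'n(\omega_0)$ you claim this equals $(a^{-1}a'a)(\omega_0)$; but knowing $n'(\omega_0)=a'(\omega_0)$ tells you nothing about $n'(a(\omega_0))$, so this identification is unjustified --- and even if it held, it would only re-derive $a'\in B$, not $a'a\in B$. The clean computation conjugates $n'$ by $a$ on the other side: $aa'(\omega_0)=a(n'(\omega_0))=(an'a^{-1})(a(\omega_0))=(an'a^{-1})n(\omega_0)\in\orb_N(\omega_0)$, whence $aa'\in B$; similarly $a(\omega_0)=n(\omega_0)$ gives $a^{-1}(\omega_0)=(a^{-1}n^{-1}a)(\omega_0)\in\orb_N(\omega_0)$, so $a^{-1}\in B$. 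With these two repairs your proof is complete and is a perfectly good elementary substitute for the paper's argument; the paper's route via the kernel $K$ simply packages the same normality bookkeeping into the standard facts that $\orb_N(\Omega)$ is a block system and that $\orb_K(\Omega)=\orb_N(\Omega)=\orb_{K\cap A}(\Omega)$ when $A$ is regular and abelian.
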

\begin{proof}
It is well known that $\orb_N(\Omega)$ is a block system  for $G$.
Let $K$ be the kernel of the action of $G$ on $\orb_N(\Omega)$. It is 
straightforward to check that $\orb_{K \cap A}(\Omega)=\orb_N(\Omega)$.
\end{proof}

Let $\alpha_i \in \sym(\Omega_i)$, $i=1,2$. The
\emph{direct product} $\alpha_1 \times \alpha_2$ is defined to be
the permutation of $\Omega_1 \times \Omega_2$ acting as
\[
(\alpha_1 \times \alpha_2)((x_1,x_2))=(\alpha_1(x_1),\alpha_2(x_2))~
\text{for all}~(x_1,x_2) \in \Omega_1 \times \Omega_2.
\]

For a prime number $p$, the \emph{affine group} $\agl(1,p)$ consists of
the permutations of the finite field $\F_p$ of the form
$x \mapsto ax+b$, where $a, b \in \F_p$ and $a \ne 0$.
The following result was known already by Galois.

\begin{proposition}[{\rm \cite[Kapitel~II, 3.6~Satz]{Huppert1967}}]\label{GALOIS}
Let $G \le \sym(\Omega)$ be a transitive and solvable group and let $|\Omega|=p$ for a prime $p$.
Then $G$ is isomorphic to a subgroup of $\agl(1,p)$.
\end{proposition}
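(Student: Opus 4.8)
The plan is to locate inside $G$ a regular normal subgroup isomorphic to $\Z_p$ and then to identify the $\sym(\Omega)$-normalizer of that subgroup with the holomorph of $\Z_p$, which is exactly $\agl(1,p)$.

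First I would record some easy consequences of transitivity on a set of prime size. Since $G$ is transitive on the $p$-element set $\Omega$, we have $p \mid |G|$; moreover the exponent of $p$ in $p!=|\sym(\Omega)|$ equals $1$, so a Sylow $p$-subgroup $P$ of $G$ has order exactly $p$ and is generated by a $p$-cycle. In particular $G\neq 1$, so, $G$ being finite and solvable, it possesses a minimal normal subgroup $N$, and minimality together with solvability forces $N$ to be an elementary abelian $q$-group for some prime $q$.

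Next I would show $N\cong\Z_p$. Exactly as in the proof of Lemma~\ref{orbits}, the set $\orb_N(\Omega)$ is a block system for $G$, so all $N$-orbits have a common size, which divides $p$. Since $G\le\sym(\Omega)$ acts faithfully and $N\neq 1$, $N$ cannot fix every point of $\Omega$, hence the common orbit size is $p$; that is, $N$ is transitive. A transitive abelian permutation group is regular, so $|N|=p$ and $N\cong\Z_p$ (consequently $N=P$, although below only the existence of a regular \emph{normal} subgroup is used).

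Finally I would invoke the standard description of the normalizer of a regular subgroup. Fix $\omega_0\in\Omega$ and transport the action along the bijection $N\to\Omega$, $n\mapsto n(\omega_0)$; then $N$ acts on itself by left translations, and $N_{\sym(\Omega)}(N)$ equals the holomorph $\mathrm{Hol}(N)=N\rtimes\aut(N)$, acting by $x\mapsto \sigma(x)\,t$ with $t\in N$ and $\sigma\in\aut(N)$. Identifying $N\cong\Z_p$ with the additive group of $\F_p$, we have $\aut(N)\cong\F_p^\times$, so $\mathrm{Hol}(N)$ consists precisely of the affine maps $x\mapsto ax+b$ with $a\in\F_p^\times$, $b\in\F_p$, i.e.\ $\mathrm{Hol}(N)=\agl(1,p)$. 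Since $N\lhd G$ yields $G\le N_{\sym(\Omega)}(N)=\agl(1,p)$, the proof is complete. The only place where solvability is genuinely needed is in forcing the minimal normal subgroup $N$ to be abelian — for non-solvable $G$ one can have a non-abelian minimal normal subgroup with no regular normal subgroup available (e.g.\ $A_5$ acting on $5$ points) — so I expect the ``main obstacle'' to be merely stating cleanly the two facts used as black boxes, namely that minimal normal subgroups of finite solvable groups are elementary abelian and that the normalizer of a regular subgroup is its holomorph, rather than any real difficulty.
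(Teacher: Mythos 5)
Your proof is correct. The paper does not prove this statement at all --- it is quoted as a classical result of Galois with a citation to Huppert --- and your argument (minimal normal subgroup of a solvable group is elementary abelian, its orbits form a block system on a set of prime size so it is transitive, transitive abelian implies regular of order $p$, and the normalizer of a regular copy of $\Z_p$ in $\sym(\Omega)$ is the holomorph $\agl(1,p)$) is precisely the standard proof that the citation stands for. All the intermediate steps you use as black boxes are accurately stated and correctly applied.
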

%--------------------------------------------------------------------------------------------------------------%
\subsection{Skew morphisms}

For a skew morphism $\varphi$ of a group $A$ with associated power
function $\pi$, it is well known that the subset 
\[
\Ker\varphi:=\{ x \in A : \pi(x) \equiv 1 \!\!\!\!  \pmod {|\varphi|} \}
\]
is subgroup of $A$, called the \emph{kernel} of $\varphi$~\cite{JS2002}. A skew morphism
$\varphi$ is \emph{kernel-preserving} if $\varphi(\Ker\varphi)=\Ker\varphi$. Moreover,
the subset 
\[
\core\varphi:=\bigcap_{i=1}^{|\varphi|}\varphi^i(\Ker\varphi)
\]
is the core of $A$ in the skew product group $G=A\langle\varphi\rangle$
~\cite[Proposition 6]{HNWY2019}.
It is well known that $\varphi$ is kernel-preserving if and only if $\Ker\varphi=\core\varphi$.
The following properties of $\Ker \varphi$ is well known.

\begin{proposition}[\rm\cite{ CJT2016,CJT2007,JS2002}]\label{CJT1}
Let $\varphi$ be a skew morphism of a finite group $A$ with associated power
function $\pi$. Then we have the following:
\begin{enumerate}[{\rm (a)}]
\item For all $a, b \in A$, $\pi(a)=\pi(a)$ if and only if $a b^{-1} \in \Ker \varphi$.
\item If $A$ is an abelian group, then $\varphi$ is kernel-preserving.
\item If $A$ is non-trivial, then the kernel $\Ker \varphi$ is also non-trivial.
\end{enumerate}
\end{proposition}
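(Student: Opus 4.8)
The plan is to derive all three parts from a single rewriting of the defining relation together with part (a); throughout write $m=|\varphi|$ and $K=\Ker\varphi$, and read every congruence modulo $m$ (in particular the intended reading of (a) is $\pi(a)\equiv\pi(b)\pmod m$ if and only if $ab^{-1}\in K$). The one tool I would set up first is the identity
\[
\varphi^{\pi(a)}(y)=\varphi(a)^{-1}\varphi(ay)\qquad\text{for all }a,y\in A,
\]
obtained by left-multiplying the defining relation $\varphi(ay)=\varphi(a)\varphi^{\pi(a)}(y)$ by $\varphi(a)^{-1}$. Since $\langle\varphi\rangle$ is cyclic of order $m$, this already yields that $\pi(a)\equiv\pi(b)\pmod m$ holds if and only if $\varphi^{\pi(a)}=\varphi^{\pi(b)}$, i.e. if and only if $\varphi(a)^{-1}\varphi(ay)=\varphi(b)^{-1}\varphi(by)$ for all $y\in A$.

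For (a) I would set $c=ab^{-1}$ and perform the substitution $y\mapsto b^{-1}y$ in the last equivalence; it rewrites the condition as $\varphi(cy)=\varphi(a)\varphi(b)^{-1}\varphi(y)$ for all $y$. Evaluating at $y=1$ forces $\varphi(a)\varphi(b)^{-1}=\varphi(c)$, so the condition becomes exactly $\varphi(cy)=\varphi(c)\varphi(y)$ for all $y$, which by the identity above (applied with $a=c$) is precisely $\pi(c)\equiv1$, that is $c\in K$. The converse is the same computation read backwards, recovering $\varphi(a)\varphi(b)^{-1}=\varphi(c)$ by evaluating the kernel relation $\varphi(cy)=\varphi(c)\varphi(y)$ at $y=b$. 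This settles (a).

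For (b) I would first record the commutativity consequence: for $k\in K$ and any $a\in A$, comparing $\varphi(ka)$ and $\varphi(ak)$ (equal since $A$ is abelian) and using $\pi(k)\equiv1$ gives $\varphi(k)\varphi(a)=\varphi(a)\varphi^{\pi(a)}(k)$, hence $\varphi^{\pi(a)}(k)=\varphi(k)$ for all $a\in A$. Then I would compute $\pi(\varphi(k))$ straight from the identity: $\varphi^{\pi(\varphi(k))}(y)=\varphi^2(k)^{-1}\varphi(\varphi(k)y)$; rewriting $\varphi(k)y=y\varphi(k)$ and expanding $\varphi(y\varphi(k))=\varphi(y)\varphi^{\pi(y)+1}(k)$, I would replace $\varphi^{\pi(y)+1}(k)=\varphi^2(k)$ using the consequence just recorded. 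Commutativity then collapses the expression to $\varphi^{\pi(\varphi(k))}(y)=\varphi(y)$ for all $y$, so $\pi(\varphi(k))\equiv1$ and $\varphi(k)\in K$. Thus $\varphi(K)\subseteq K$, and finiteness upgrades this to $\varphi(K)=K$; that is, $\varphi$ is kernel-preserving.

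For (c) I would argue by contradiction: if $K=1$ then part (a) makes $\pi$ injective on $A$, while the identity shows that $\pi(a)\equiv0$ would force $\varphi=\id$ (and hence $K=A$); so for a proper $\varphi$ the $|A|$ values of $\pi$ are distinct nonzero residues modulo $m$, forcing $m\geq|A|+1$. The step I expect to be the main obstacle is contradicting this bound for a general, possibly nonabelian, $A$: the machinery of Section~\ref{sec:known} is not directly available, since the skew product group $A\langle\varphi\rangle$ need not be solvable, so I would complete the argument through the classical permutation-theoretic analysis of $\varphi$ acting on $A$ from \cite{JS2002,CJT2007}. When $A$ is abelian, however, $A\langle\varphi\rangle$ is solvable by Proposition~\ref{AB}(a), so the tools of Section~\ref{sec:known} could instead be brought to bear on the group $A\langle\varphi\rangle$.
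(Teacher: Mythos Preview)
The paper does not supply its own proof of this proposition; it is quoted as a known result from \cite{CJT2016,CJT2007,JS2002}. So there is no in-paper argument to compare against, and the relevant question is simply whether your proposal stands on its own.

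Your arguments for (a) and (b) are correct and self-contained. The rewriting $\varphi^{\pi(a)}(y)=\varphi(a)^{-1}\varphi(ay)$ is exactly the right tool, the substitution $y\mapsto b^{-1}y$ in (a) is clean, and in (b) the two-step deduction (first $\varphi^{\pi(a)}(k)=\varphi(k)$ from commutativity, then $\pi(\varphi(k))\equiv1$) is the standard line of reasoning and is carried out without error.

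Part (c), however, is not a proof but a reduction: you correctly show that $K=1$ forces $\pi$ to be injective with nonzero values, hence $|\varphi|\geq|A|+1$, and then you explicitly leave open the step that contradicts this bound, deferring to ``the classical permutation-theoretic analysis \ldots\ from \cite{JS2002,CJT2007}''. That deferred step is precisely the substance of (c) in those references: one needs an \emph{a priori} upper bound of the form $|\varphi|<|A|$ for skew morphisms of nontrivial groups, and this is not a formal consequence of the identity you set up. The usual route is to exhibit an orbit of $\langle\varphi\rangle$ on $A$ of length exactly $|\varphi|$ (so that, together with the fixed point $1_A$, one gets $|A|\geq|\varphi|+1$); this uses the multiplicative structure of the power function along an orbit and is a genuine additional argument, not a bookkeeping step. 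Your remark that solvability of $A\langle\varphi\rangle$ helps when $A$ is abelian is true but does not by itself close the gap either, since nothing in Section~\ref{sec:known} directly yields the required bound on $|\varphi|$. As written, (c) should be regarded as incomplete.
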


The index $|A : \Ker \varphi|$ is called the {\em skew-type} of $\varphi$.
 Note that $\Ker \varphi=A$ if and only if $\varphi$ is
an automorphism of $A$. Using this observation and Proposition~\ref{CJT1}(c), 
we have the following corollary.

\begin{corollary}\label{cor:CJT1}
If $p$ is a prime number, then all skew morphisms of $\Z_p$ are
automorphisms.
\end{corollary}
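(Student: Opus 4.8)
The plan is to exploit the observation recorded just before this corollary---namely that a skew morphism $\varphi$ of a group $A$ is an automorphism of $A$ if and only if $\Ker\varphi = A$---in combination with Proposition~\ref{CJT1}(c), which guarantees that $\Ker\varphi$ is non-trivial whenever $A$ is non-trivial. So the argument will be a two-line deduction from material already in hand.

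Concretely, I would take an arbitrary skew morphism $\varphi$ of $\Z_p$. Since $\Z_p$ is non-trivial, Proposition~\ref{CJT1}(c) yields $\Ker\varphi \ne 1$. Recalling (from the paragraph preceding Proposition~\ref{CJT1}) that $\Ker\varphi$ is a subgroup of $\Z_p$, and using that $p$ is prime so that $\Z_p$ has no proper non-trivial subgroups, I conclude $\Ker\varphi = \Z_p$. By the quoted characterization, $\varphi$ is therefore an automorphism of $\Z_p$, which proves the corollary.

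I do not expect any real obstacle: the statement is an immediate consequence of the cited facts together with the trivial subgroup structure of a cyclic group of prime order. The only point that needs to be kept in mind is that $\Ker\varphi$ is genuinely a subgroup and genuinely non-trivial, both of which are already established in the preliminaries.
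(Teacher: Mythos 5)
Your argument is exactly the one the paper intends: it derives the corollary from the observation that $\Ker\varphi=A$ if and only if $\varphi$ is an automorphism, together with Proposition~\ref{CJT1}(c) and the fact that $\Z_p$ has no proper non-trivial subgroups. The proposal is correct and matches the paper's (implicit) proof.
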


The direct product of skew morphisms of groups $A$ and $B$, respectively,
may not be a skew morphism of the group $A \times B$.
The following criterion will be useful.

\begin{proposition}[{\rm \cite{Zhang2022}}]\label{Z}
Let $G=A\times B$ and let $\varphi$ and $\psi$ be
skew morphisms of $A$ and $B$ with associated power functions
$\pi_\varphi$ and $\pi_\psi$, respectively.
\begin{enumerate}[{\rm (a)}]
\item The direct product $\varphi \times \psi$ is a skew morphism of $G$ if and only if
\[
\pi_\varphi(a) \equiv \pi_\psi(b) \equiv 1 \!\!\!\! \pmod {d}~\text{for all}~a \in A~
\text{and}~b \in B,
\]
where $d=\gcd(|\varphi|,|\psi|)$.
\item Suppose that $\varphi \times \psi$ is a skew morphism of $G$ with associated
power function $\pi$. Then for all $a\in A$ and $b\in B$,
\[
\pi((a,b)) \equiv \pi_\varphi(a) \!\!\!\!  \pmod {|\varphi|}  \quad\text{and} \quad 
\pi((a,b)) \equiv \pi_\psi(b) \!\!\!\!   \pmod {|\psi|}.
\]
\end{enumerate}
\end{proposition}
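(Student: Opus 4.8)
The plan is to verify the defining skew-morphism identity directly for the candidate permutation $\chi:=\varphi\times\psi$ and to read off the conditions coordinate by coordinate. Since $\varphi(1_A)=1_A$ and $\psi(1_B)=1_B$, the map $\chi$ fixes $(1_A,1_B)$, so the whole content lies in the functional equation. First I would fix $(a_1,b_1),(a_2,b_2)\in G$ and compute the left-hand side $\chi\bigl((a_1,b_1)(a_2,b_2)\bigr)=\bigl(\varphi(a_1a_2),\psi(b_1b_2)\bigr)$, then apply the skew-morphism identities for $\varphi$ and $\psi$ in each coordinate to rewrite it as
\[
\bigl(\varphi(a_1)\varphi^{\pi_\varphi(a_1)}(a_2),\ \psi(b_1)\psi^{\pi_\psi(b_1)}(b_2)\bigr).
\]
For a putative power value $k=\pi((a_1,b_1))$ the right-hand side $\chi((a_1,b_1))\cdot\chi^{k}((a_2,b_2))$ equals $\bigl(\varphi(a_1)\varphi^{k}(a_2),\ \psi(b_1)\psi^{k}(b_2)\bigr)$, since products and powers in $A\times B$ act coordinatewise. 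Cancelling $\varphi(a_1)$ and $\psi(b_1)$ and letting $a_2,b_2$ range freely reduces the identity to the operator equalities $\varphi^{k}=\varphi^{\pi_\varphi(a_1)}$ on $A$ and $\psi^{k}=\psi^{\pi_\psi(b_1)}$ on $B$, i.e.\ to the simultaneous congruences $k\equiv\pi_\varphi(a_1)\pmod{|\varphi|}$ and $k\equiv\pi_\psi(b_1)\pmod{|\psi|}$.

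From here both parts are a Chinese-Remainder bookkeeping exercise. For part (a), such a $k$ exists for each pair $(a_1,b_1)$ precisely when the prescribed residues are compatible modulo $d=\gcd(|\varphi|,|\psi|)$, i.e.\ when $\pi_\varphi(a)\equiv\pi_\psi(b)\pmod d$ for all $a\in A$, $b\in B$; and when this compatibility holds one may pick for each pair the unique solution $k$ modulo $\lcm(|\varphi|,|\psi|)=|\chi|$ and set $\pi((a,b))=k$, which produces a valid power function and shows $\chi$ is a skew morphism. To match this with the symmetric-looking condition $\pi_\varphi(a)\equiv\pi_\psi(b)\equiv1\pmod d$, I would specialise: taking $a=1_A$ and using $\pi_\varphi(1_A)\equiv1\pmod{|\varphi|}$ (hence modulo $d$) forces $\pi_\psi(b)\equiv1\pmod d$ for every $b$, and symmetrically $\pi_\varphi(a)\equiv1\pmod d$ for every $a$; the reverse implication is immediate. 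For part (b), assuming $\chi$ is already a skew morphism with power function $\pi$, the reduced equation above read with $k=\pi((a,b))$ is exactly the asserted pair $\pi((a,b))\equiv\pi_\varphi(a)\pmod{|\varphi|}$ and $\pi((a,b))\equiv\pi_\psi(b)\pmod{|\psi|}$.

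The only point requiring a moment of care is the passage from the permutation equality $\varphi^{k}=\varphi^{\pi_\varphi(a_1)}$ to the exponent congruence $k\equiv\pi_\varphi(a_1)\pmod{|\varphi|}$ (and likewise for $\psi$), which is simply the statement that two powers of an element agree iff their exponents are congruent modulo its order; together with the standard fact $\pi(1)\equiv1$ for skew morphisms, this is all the input needed. I therefore expect no genuine obstacle: the argument is a coordinatewise expansion followed by elementary number theory, and the bulk of the work is merely organising the equivalence of the two formulations of the condition in part (a).
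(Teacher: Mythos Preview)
Your argument is correct: the coordinatewise expansion, the reduction to the simultaneous congruences $k\equiv\pi_\varphi(a_1)\pmod{|\varphi|}$ and $k\equiv\pi_\psi(b_1)\pmod{|\psi|}$, the Chinese Remainder step, and the specialisation at the identity to convert ``$\pi_\varphi(a)\equiv\pi_\psi(b)\pmod d$ for all $a,b$'' into ``$\pi_\varphi(a)\equiv\pi_\psi(b)\equiv1\pmod d$'' are all sound, and the observation $|\chi|=\lcm(|\varphi|,|\psi|)$ justifies that the resulting $\pi$ is well defined modulo $|\chi|$. Note, however, that the paper does not supply its own proof of this proposition---it is quoted from \cite{Zhang2022}---so there is nothing in the present paper to compare your approach against; your direct verification is exactly the standard way to establish the result.
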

Finally, if $\varphi$ is a skew morphism of a group $A$ and $\theta$ is an automorphism of $A$, then
$\theta\varphi\theta^{-1}$ is also a skew morphism of $A$ (see~\cite[Lemma 2.2]{WHYZ2019}), so the automorphism group $\aut(A)$ acts
on the set of skew morphisms of $A$ by conjugation. Two skew morphisms $\varphi$ and $\psi$ of $A$ are \textit{equivalent} if they belong to the same orbit of this action. 
%--------------------------------------------------------------------------------------------------------------%
\section{Constructions}\label{sec:cont}
In this section, for certain $n$, we construct non-smooth skew morphisms of the cyclic group $\Z_n$.
The following lemma is very useful.

\begin{lemma}\label{direct}
Let $G=A\times B$ and let $\varphi$ and $\psi$ be skew morphisms of $A$ and $B$, respectively. 
If $\varphi \times \psi$ is a skew morphism of $A \times B$, then $\varphi \times \psi$ is smooth if and only if
both $\varphi$ and $\psi$ are smooth.
\end{lemma}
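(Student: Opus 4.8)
The plan is to reduce the whole statement to Proposition~\ref{Z}(b), which expresses the power function of $\chi:=\varphi\times\psi$ in terms of those of the two factors. Write $\pi$, $\pi_\varphi$, $\pi_\psi$ for the power functions of $\chi$, $\varphi$, $\psi$, respectively. I would first record the elementary fact that the order of a direct product of permutations is the least common multiple of the orders, so that $|\chi|=\lcm(|\varphi|,|\psi|)$; in particular each of $|\varphi|$ and $|\psi|$ divides $|\chi|$. Note also that Proposition~\ref{Z}(b) already presupposes that $\chi$ is a skew morphism, which is precisely the hypothesis of the lemma, so it applies directly.

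For the forward implication I would assume $\chi$ smooth, fix $a\in A$, and take $b=1_B$. Smoothness gives $\pi(\chi(a,b))\equiv\pi((a,b))\pmod{|\chi|}$, hence also modulo $|\varphi|$; combining this with Proposition~\ref{Z}(b) applied to the pairs $(\varphi(a),\psi(b))$ and $(a,b)$ yields
\[
\pi_\varphi(\varphi(a))\equiv\pi(\chi(a,b))\equiv\pi((a,b))\equiv\pi_\varphi(a)\pmod{|\varphi|}.
\]
Since $a$ was arbitrary, $\varphi$ is smooth, and the symmetric argument (swapping the roles of $A$ and $B$) shows $\psi$ is smooth.

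For the converse I would assume $\varphi$ and $\psi$ both smooth and fix $(a,b)\in G$. Two applications of Proposition~\ref{Z}(b) together with smoothness of $\varphi$ give
\[
\pi(\chi(a,b))=\pi((\varphi(a),\psi(b)))\equiv\pi_\varphi(\varphi(a))\equiv\pi_\varphi(a)\equiv\pi((a,b))\pmod{|\varphi|},
\]
and symmetrically $\pi(\chi(a,b))\equiv\pi((a,b))\pmod{|\psi|}$. Since an integer divisible by both $|\varphi|$ and $|\psi|$ is divisible by $\lcm(|\varphi|,|\psi|)=|\chi|$, we conclude $\pi(\chi(a,b))\equiv\pi((a,b))\pmod{|\chi|}$, so $\chi$ is smooth.

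There is no serious obstacle here; the argument is essentially bookkeeping with the two reduction maps $\Z_{|\chi|}\to\Z_{|\varphi|}$ and $\Z_{|\chi|}\to\Z_{|\psi|}$. The only two points requiring a moment's care are the identity $|\varphi\times\psi|=\lcm(|\varphi|,|\psi|)$ and the ``Chinese remainder'' step that promotes a common congruence modulo $|\varphi|$ and modulo $|\psi|$ to a congruence modulo their least common multiple.
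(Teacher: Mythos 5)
Your proof is correct and follows essentially the same route as the paper's: both directions rest on Proposition~\ref{Z}(b), with the forward implication obtained by specializing to $(a,1_B)$ and the converse by combining the two congruences modulo $|\varphi|$ and $|\psi|$ into one modulo $|\varphi\times\psi|=\lcm(|\varphi|,|\psi|)$. Your version is slightly more explicit than the paper's in spelling out the order identity and the lifting of the congruences to the least common multiple, which the paper leaves implicit.
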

\begin{proof}
Let $\pi, \pi_\varphi$ and $\pi_\psi$ be the power functions associated with $\varphi \times \psi, \varphi$ 
and $\psi$, respectively.   Assume that $\varphi \times \psi$ is smooth. For any $a \in A$, 
by Proposition~\ref{Z}(b) we have
\[
\pi((\varphi \times \psi)(a, 1_B)) = \pi(a, 1_B) \equiv \pi_{\varphi}(a) \!\!\!\! 
\pmod  {|\varphi|}
\]
 and 
 \[
 \pi((\varphi \times \psi)(a, 1_B)) = \pi(\varphi(a), 1_B) \equiv 
\pi_{\varphi}(\varphi(a)) \!\!\!\! \pmod  {|\varphi|}.
 \]
This implies that 
$\pi_{\varphi}(\varphi(a)) \equiv \pi_{\varphi}(a) \!\! \pmod  {|\varphi|}$, and 
hence $\varphi$ is smooth. Similarly, $\psi$ is also smooth.

Conversely, suppose that both $\varphi$ and $\psi$ are smooth. Then for any $(a, b) \in A \times B$, 
also by Proposition~\ref{Z}(b), we have
\[
\pi((\varphi \times \psi)(a, b)) = \pi(\varphi(a), \psi(b)) \equiv \pi_{\varphi}(\varphi(a))
\equiv  \pi_{\varphi}(a) \equiv  \pi(a,b) \!\!\!\! \pmod  {|\varphi|}
\]
and
\[
\pi((\varphi \times \psi)(a, b)) = \pi(\varphi(a), \psi(b)) \equiv  \pi_{\psi}(\psi(b)) \equiv  \pi_{\psi}(b) \equiv  \pi(a,b) \!\!\!\! \pmod  {|\psi|}.
\]
This implies that  $\pi((\varphi \times \psi)(a, b)) \equiv \pi(a,b) \!\! \pmod  {|\varphi \times \psi|}$, and hence $\varphi \times \psi$  is smooth.
\end{proof}

For positive integers $s$ and $t$, we define $\tau(s,t):=\sum_{i=1}^ts^{i-1}.$ For integers $n$ and 
$r$ with $n \geq 1$, we write $r \in \Z_n^*$ if $\gcd(r,n)=1$.  In this case the \emph{multiplicative order} 
of $r$ in $\Z_n$ is defined to  be the smallest positive integer $l$ for which 
$r^l \equiv 1 \!\! \pmod n$.

\begin{proposition}[{\rm \cite{BJ2017,HNWY2019}}]\label{CSM}
For $n>1$, the proper smooth skew morphisms of $\Z_n$ and the associated power functions are given by the formula
\begin{align}\label{ZSM}
\varphi(x) \equiv x+rk\frac{\tau(s,t)^x-1}{\tau(s,t)-1} \!\!\!\! \pmod{n}~
\text{and}~ \pi(x )\equiv t^x \!\!\!\! \pmod{m},
\end{align}
where $k>1$ is a proper divisor of $n$, $r\in\Z_{n/k}$,  $s\in\Z_{n/k}^*$  and $t \in
\Z_m^*$ are positive integers satisfying the following conditions:
\begin{enumerate}[\rm(a)]
\item $m$ is the smallest positive integer such that 
$r\sum_{i=1}^ms^{i-1} \equiv 0 \!\! \pmod{n/k}$.
\item $t$ has multiplicative order $k$ in $\Z_m$.
\item $s-1 \equiv r\big((\sum_{i=1}^ts^{i-1})^k-1\big)/(\sum_{i=1}^ts^{i-1}-1) 
\!\! \pmod {n/k}$.
\item $s^{t-1} \equiv 1 \!\! \pmod{n/k}$.
\end{enumerate}
Moreover, $k$ is equal to the skew-type of $\varphi$, and $m$ is equal to the order of 
$\varphi$.
\end{proposition}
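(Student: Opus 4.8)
The plan is to prove both implications of the classification by passing to the skew product group $G=\Z_n\langle\varphi\rangle$ and squeezing the smoothness hypothesis into a single clean commutation rule. Throughout I would write $\Z_n$ additively and identify each $a\in\Z_n$ with the translation $L_a\in G$, so that the defining identity becomes $\varphi L_a=L_{\varphi(a)}\varphi^{\pi(a)}$. Let $\varphi$ be a proper smooth skew morphism with power function $\pi$ and order $m=|\varphi|$. Since $\Z_n$ is abelian, $\varphi$ is kernel-preserving by Proposition~\ref{CJT1}(b), and by Proposition~\ref{CJT1}(c) together with properness the kernel is a nontrivial proper subgroup, so $\Ker\varphi=\langle k\rangle$ for a proper divisor $k>1$, which is the skew-type. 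On $\Ker\varphi$ the power function is identically $1$, so $\varphi$ restricts to an endomorphism there; being bijective and kernel-preserving it is an automorphism of $\langle k\rangle\cong\Z_{n/k}$, hence multiplication by some $s\in\Z_{n/k}^*$. Set $t=\pi(1)$.

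The first real step is to determine $\pi$. Starting from $\varphi L_a=L_{\varphi(a)}\varphi^{\pi(a)}$, an induction on $j$ that uses the smoothness identity $\pi(\varphi^i(a))\equiv\pi(a)\pmod m$ at each stage yields the smooth commutation rule $\varphi^jL_a=L_{\varphi^j(a)}\varphi^{j\pi(a)}$. Decomposing $\varphi L_1^x$ as both $L_{\varphi(x)}\varphi^{\pi(x)}$ and $(\varphi L_1)L_1^{x-1}=L_{\varphi(1)+\varphi^t(x-1)}\varphi^{t\pi(x-1)}$ and invoking uniqueness of the factorization $G=\Z_n\langle\varphi\rangle$ gives simultaneously the two recursions $\pi(x)\equiv t\,\pi(x-1)\pmod m$ and $\varphi(x)=\varphi(1)+\varphi^t(x-1)$. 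Since $\pi(0)\equiv 1$, the first integrates to $\pi(x)\equiv t^x\pmod m$, which is the stated formula for $\pi$; and because $x\in\Ker\varphi\iff\pi(x)\equiv 1\iff k\mid x$, the multiplicative order of $t$ modulo $m$ is exactly $k$, giving condition~(b). Smoothness now forces $t^{\varphi(x)}\equiv t^x\pmod m$, hence $\varphi(x)\equiv x\pmod k$; in particular $\varphi(1)=1+rk$ for a well-defined $r\in\Z_{n/k}$.

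Next I would extract the closed form and the remaining conditions. A short induction using the kernel multiplier $s$ shows $\varphi^i(1)\equiv 1+rk\,\frac{(s^t)^i-1}{s^t-1}\pmod n$, and summing $\varphi(x)=\sum_{j=0}^{x-1}\varphi^{t^j}(1)$ (from $\varphi(x+1)=\varphi(x)+\varphi^{\pi(x)}(1)$) expresses $\varphi$ through powers of $s^t$. Each condition then appears structurally. Computing $\varphi^t(k)$ two ways, via $\varphi^t(x)=\varphi(x+1)-\varphi(1)$ with $\pi(k)\equiv 1$, and via the kernel action $\varphi^t(k)=s^tk$, forces $s^t\equiv s\pmod{n/k}$, i.e.\ condition~(d); matching $\varphi(k)=sk$ against the formula gives condition~(c); and $\varphi^m(1)=1$ together with minimality of $m$ gives condition~(a). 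Finally, condition~(d) is precisely the identity needed to collapse the $s^t$-series into $\tau(s,t)=\sum_{i=1}^t s^{i-1}$: using $s^t\equiv s\pmod{n/k}$ one checks by induction that $\frac{s^{t^{j+1}}-1}{s^t-1}\equiv\tau(s,t)^j\pmod{n/k}$, whence $\varphi^{t^j}(1)\equiv 1+rk\,\tau(s,t)^j$ and the summation collapses to $\varphi(x)\equiv x+rk\,\frac{\tau(s,t)^x-1}{\tau(s,t)-1}\pmod n$.

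For the converse I would reverse the argument: given $k,r,s,t$ and the associated $m$ from condition~(a) satisfying (a)--(d), define $\varphi$ and $\pi$ by the displayed formulas and verify that $\varphi$ is a well-defined permutation fixing $0$, that it obeys the skew-morphism identity $\varphi(x+y)\equiv\varphi(x)+\varphi^{\pi(x)}(y)$, and that it is smooth. Smoothness is immediate, since the formula gives $\varphi(x)\equiv x\pmod k$, so $\pi(\varphi(x))=t^{\varphi(x)}\equiv t^x=\pi(x)\pmod m$ as $t$ has order $k$ modulo $m$. I expect the main obstacle, and the most computation-heavy part, to be this verification of the functional equation together with the well-definedness and order bookkeeping; the conceptual crux, however, is the smooth commutation rule, which is the single place where smoothness is genuinely used and which makes both $\pi(x)=t^x$ and the recursion for $\varphi$ fall out at once.
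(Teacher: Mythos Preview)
The paper does not prove Proposition~\ref{CSM}: it is quoted as a known classification from \cite{BJ2017,HNWY2019} and used only as input to the constructions in Section~\ref{sec:cont}, so there is no in-paper argument to compare against.

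That said, your outline is essentially the argument given in those references. The decisive observation is exactly your smooth commutation rule $\varphi^{j}L_a=L_{\varphi^{j}(a)}\varphi^{\,j\pi(a)}$, which immediately yields both the multiplicative recursion $\pi(x)\equiv t\,\pi(x-1)$ (hence $\pi(x)\equiv t^{x}$ and condition~(b)) and the additive recursion $\varphi(x+1)=\varphi(x)+\varphi^{\pi(x)}(1)$. Your derivations of (d) via the two computations of $\varphi^{t}(k)$, of (c) via $\varphi(k)=sk$, and of (a) via $\varphi^{m}(1)=1$ are the standard ones. One point of care: the denominator $\tau(s,t)-1$ need not be a unit in $\Z_{n/k}$, so the displayed formula has to be read as $rk\sum_{j=0}^{x-1}\tau(s,t)^{j}$; the collapse of the $s^{t}$-series into powers of $\tau(s,t)$ uses (d), which you correctly establish beforehand. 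With that interpretation in place, the converse verification is indeed routine but tedious, precisely as you anticipate.
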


We remark that, even though all smooth skew morphisms of $\Z_n$ are determined, 
 it is not clear from Proposition~\ref{CSM} that,  for which $n$, the group $\Z_n$ 
underlies a non-smooth skew morphism. We also observe in~\eqref{ZSM} that  the proper smooth 
skew morphisms of $\Z_n$ are all defined by exponential functions on $\Z_n$. 
By contrast, it is well known that every automorphism of $\Z_n$ is a linear function 
of the form $x \mapsto rx$, where $r \in \Z_n^*$. Surprisingly enough, it was shown in~\cite{HKZ2021} 
that the proper skew morphisms of $\Z_n$ which are square roots of automorphisms 
are quadratic polynomials over the ring $\Z_n$. 

\begin{proposition}[\cite{HKZ2021}]\label{ROOT}
Every proper skew morphism $\varphi$ of $\Z_n$
such that $\varphi^2$ is an automorphism of $\Z_n$
 is equivalent to a skew morphism of the form
\[
\varphi(x)\equiv sx-\frac{x(x-1)n}{2k} \!\!\!\! \pmod{n},
\]
where $k$ and $s$ are positive integers satisfying the following conditions:
\begin{enumerate}[\rm(a)]
\item $k^2$ divides $n$ and $s \in \Z_n^*$ if $k$ is odd, and $2k^2$ divides $n$ and
$s\in\Z_{n/2}^*$ if $k$ is even.
\item $s \equiv -1 \!\! \pmod{k}$, $s$ has multiplicative order $2\ell$ in 
$\Z_{n/k}$, and $\gcd(w,k)=1$, where
\[
w \equiv \frac{k}{n}(s^{2\ell}-1)-\frac{s(s-1)}{2} \ell \!\!\!\! \pmod{k}.
\]
\end{enumerate}
The skew-type of $\varphi$ is equal to $k$, and the order of $\varphi$ is equal to $m:=2k\ell$, and finally, the
power function of $\varphi$ is given by 
\[
\pi(x) \equiv 1+2xw'\ell \!\!\!\! \pmod{m},
\]
where $w'$ is determined by the congruence $w'w \equiv 1 \!\! \pmod{k}$.
\end{proposition}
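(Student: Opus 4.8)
The plan is to recover the explicit formula for $\varphi$ by first determining its power function $\pi$ and then integrating the skew--morphism identity. Write $m=|\varphi|$, let $k$ be the skew--type of $\varphi$, and let $s_0\in\Z_n^*$ be the multiplier of the automorphism $\varphi^2$. First I would record the basic consequence of the hypothesis: expanding $\varphi^2(x+y)=\varphi\big(\varphi(x)+\varphi^{\pi(x)}(y)\big)$ gives $\varphi^2(x+y)=\varphi^2(x)+\varphi^{\pi(x)+\pi(\varphi(x))}(y)$, and comparing with $\varphi^2(x+y)=\varphi^2(x)+\varphi^2(y)$ forces
\[
\pi(x)+\pi(\varphi(x))\equiv 2\pmod m\qquad\text{for all }x\in\Z_n.
\]
In particular, if $\varphi$ is proper then $m$ is even, $\pi$ is constant modulo $2$ along each $\varphi$--orbit, and $\pi(\varphi^2(x))\equiv\pi(x)$.

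Next I would show that $\pi$ is \emph{affine}. Expanding $\varphi(x+(y+z))=\varphi((x+y)+z)$ in two ways yields the cocycle identity $\pi(x+y)\equiv\sum_{i=0}^{\pi(x)-1}\pi(\varphi^i(y))\pmod m$. Put $y=1$: by the displayed relation the numbers $\pi(\varphi^i(1))$ alternate between $\pi(1)$ and $2-\pi(1)$, so the sum telescopes and, with $c:=\pi(1)-1$, one gets $\pi(x+1)\equiv\pi(x)$ if $\pi(x)$ is even and $\pi(x+1)\equiv\pi(x)+c$ if $\pi(x)$ is odd. Since $\Z_n$ is abelian, $\varphi$ is kernel--preserving (Proposition~\ref{CJT1}(b)), so $\pi$ factors through $\Z_n/\Ker\varphi\cong\Z_k$ and hence has period $k$, with $\pi(0)=1$. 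A short minimality argument (were $\pi(x_0)$ even for some $x_0$, the recursion would keep $\pi$ constant and equal to an even number from $x_0$ onward, contradicting $\pi(0)=1$ by periodicity) shows every value of $\pi$ is odd, whence
\[
\pi(x)\equiv 1+cx\pmod m,\qquad c\text{ even}.
\]
Because $\pi$ attains exactly $k$ values (Proposition~\ref{CJT1}(a)), $\gcd(c,m)=m/k$; combined with $m$ even this forces $m=2k\ell$ for a positive integer $\ell$ and $c=2\ell w'$ with $\gcd(w',k)=1$. This already yields the stated form of $\pi$ and the portion of condition~(b) concerning $\ell$ and $w'$.

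Knowing $\pi$, I would reconstruct $\varphi$ from $\varphi(x+1)=\varphi(x)+\varphi^{\pi(x)}(1)$, i.e. $\varphi(x)=\sum_{i=0}^{x-1}\varphi^{\pi(i)}(1)$; substituting $\pi(i)=1+ci$ with $c$ even and $\varphi^{2t}(y)=s_0^{t}y$ turns the right--hand side into a sum of values of $\varphi$ at powers of $s_0$, and unwinding this recursion --- after replacing $\varphi$ by an equivalent skew morphism $\theta\varphi\theta^{-1}$ with $\theta$ a scaling automorphism of $\Z_n$ chosen to normalise $\varphi(1)=:s$ and the quadratic coefficient --- gives $\varphi(x)\equiv sx-\tfrac{x(x-1)n}{2k}\pmod n$. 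Finally I would determine which pairs $(k,s)$ occur: imposing that $\varphi^2$ be linear on $\Z_n$ forces the divisibility $k^2\mid n$ (and $2k^2\mid n$ when $k$ is even) and that $s$ have multiplicative order $2\ell$ in $\Z_{n/k}$; the restriction of $\varphi$ to $\Ker\varphi\cong\Z_{n/k}$, an automorphism because $\pi\equiv1$ there, gives $s\in\Z_n^*$ (resp.\ $s\in\Z_{n/2}^*$); reducing $\varphi$ modulo $\Ker\varphi$ gives $s\equiv-1\pmod k$; and matching the power function of this polynomial against $\pi(x)\equiv1+cx$ yields $\gcd(w,k)=1$ for $w$ as in~(b) together with $w'w\equiv1\pmod k$, the skew--type and order readings confirming the values $k$ and $m=2k\ell$. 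Conversely, one checks directly that whenever $(k,s)$ satisfies (a)--(b) the displayed polynomial is a permutation of $\Z_n$ which is a skew morphism with the asserted power function and whose square is an automorphism.

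The expansions yielding the displayed relations are routine. The substantive difficulties lie in the last step: closing up the recursion for $\varphi$ into a single quadratic, and choosing the normalising automorphism so that the formula comes out exactly as stated, require some care, but the genuinely delicate point is the arithmetic --- one must track $2$--adic valuations precisely enough to see why $\varphi^2\in\aut(\Z_n)$ forces only $k^2\mid n$ when $k$ is odd but the stronger $2k^2\mid n$ when $k$ is even. This parity dichotomy is where the bulk of the case analysis concentrates, and I expect it to be the main obstacle.
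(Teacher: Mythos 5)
The paper does not actually prove Proposition~\ref{ROOT}; it is imported verbatim from \cite{HKZ2021}, so there is no in-paper argument to compare you against, and your attempt has to be judged on its own. The first half of your proposal is correct and essentially complete: the identity $\pi(x)+\pi(\varphi(x))\equiv 2\pmod m$ obtained by expanding $\varphi^2(x+y)$ in two ways, the evenness of $m$ for proper $\varphi$ (since otherwise $\varphi\in\langle\varphi^2\rangle\le\aut(\Z_n)$), the cocycle identity $\pi(x+y)\equiv\sum_{i=0}^{\pi(x)-1}\pi(\varphi^i(y))\pmod m$, the telescoping with $y=1$, the argument that every value of $\pi$ is odd, and the conclusion $\pi(x)\equiv 1+2\ell w'x\pmod{2k\ell}$ with $m=2k\ell$ and $\gcd(w',k)=1$ are all sound. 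This genuinely recovers the stated power function and the portion of condition (b) concerning $\ell$ and $w'$.

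The gap is in the second half, and it sits exactly where you yourself flag it. From $\pi(i)\equiv 1+ci$ with $c=2\ell w'$ the recursion gives $\varphi(x)=\sum_{i=0}^{x-1}\varphi^{1+ci}(1)=\varphi(1)\sum_{i=0}^{x-1}S^i$ with $S=s_0^{\ell w'}$ --- an \emph{exponential} expression of the same shape as the smooth skew morphisms in Proposition~\ref{CSM}, not yet a quadratic. To collapse $\sum_{i<x}S^i$ into $x+\tfrac{x(x-1)}{2}(S-1)$ one must show that $(S-1)^2$ annihilates the relevant binomial terms modulo $n$, i.e.\ essentially that $S\equiv 1\pmod{n/k}$ and $k^2\mid n$, with the extra factor of $2$ when $k$ is even coming from the division by $2$ in $\tfrac{x(x-1)}{2}$; and one must then verify $\gcd(w,k)=1$ for the specific $w$ of condition (b). These are precisely the arithmetic conditions (a) and the rest of (b) --- the actual content of the classification --- and your text asserts them (``imposing that $\varphi^2$ be linear forces\ldots'') rather than deriving them, explicitly deferring the $2$-adic bookkeeping as ``the main obstacle.'' So what you have is a correct and well-motivated roadmap whose analysis of the power function is a genuine proof, but the passage from $\pi$ to the explicit polynomial normal form and to conditions (a)--(b) is missing, and without it the proposition is not established.
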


We are ready to show that $\Z_n$ underlies
a non-smooth skew morphism for certain $n$.

\begin{lemma}\label{PNS}
\begin{enumerate}[{\rm (a)}]
\item For any $e\geq 2$, the cyclic group $\Z_{p^e}$ underlies a non-smooth skew morphism,
where $p$ is an odd prime.
\item For any $e\geq 5$, the cyclic group $\Z_{2^e}$ underlies a non-smooth skew morphism.
\end{enumerate}
\end{lemma}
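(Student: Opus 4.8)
The plan is to produce the required non-smooth skew morphisms explicitly, realizing them as square roots of automorphisms and reading them off from Proposition~\ref{ROOT}. The key structural feature there is that such a skew morphism has an \emph{affine} power function $\pi(x)\equiv 1+2w'\ell x\pmod m$ with $m=2k\ell$ and $\gcd(w',k)=1$, while its value at $1$ is $\varphi(1)\equiv s\equiv -1\pmod k$. These two facts are incompatible with smoothness whenever $k\ge 3$: one computes $\pi(\varphi(1))-\pi(1)\equiv 2w'\ell(s-1)\equiv -4w'\ell\pmod m$, the last congruence holding because $s\equiv -1\pmod k$ and $2w'\ell k\equiv 0\pmod m$; and $-4w'\ell\equiv 0\pmod{2k\ell}$ would force $k\mid 2w'$, which is impossible since $\gcd(w',k)=1$ and $k\ge 3$. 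Thus it suffices, for each admissible $n$, to choose parameters in Proposition~\ref{ROOT} whose skew-type $k$ is at least $3$.

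For part (a) I would take $n=p^e$, $k=p$, $s\equiv -1$ and $\ell=1$. The hypotheses (a) and (b) of Proposition~\ref{ROOT} are then routine to verify: $k$ is odd and $k^2=p^2$ divides $p^e$ since $e\ge 2$; $s\in\Z_n^*$ and $s\equiv -1\pmod k$; the multiplicative order of $-1$ in $\Z_{n/k}=\Z_{p^{e-1}}$ equals $2$ because $p^{e-1}\ge 3$, forcing $\ell=1$; and $w\equiv\frac kn(s^2-1)-\frac{s(s-1)}2\equiv -1\pmod k$, so $\gcd(w,k)=1$. Hence Proposition~\ref{ROOT} supplies a proper skew morphism $\varphi$ of $\Z_{p^e}$ of skew-type $k=p\ge 3$ and order $m=2p$, with $\varphi(1)\equiv -1$ and power function $\pi(x)\equiv 1+2w'x\pmod{2p}$, where $w'w\equiv 1\pmod p$. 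Since $\gcd(w',p)=1$ and $p$ is odd, $\pi(\varphi(1))-\pi(1)\equiv -4w'\not\equiv 0\pmod{2p}$, so $\varphi$ is not smooth.

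For part (b) the same recipe works with $n=2^e$, $k=4$, $s\equiv -1$, $\ell=1$: now $k$ is even and $2k^2=32$ divides $2^e$ since $e\ge 5$; $s\in\Z_{n/2}^*$ and $s\equiv -1\equiv 3\pmod 4$; the multiplicative order of $-1$ in $\Z_{n/k}=\Z_{2^{e-2}}$ equals $2$ because $2^{e-2}\ge 8$, so $\ell=1$; and $w\equiv -1\equiv 3\pmod 4$, so $\gcd(w,4)=1$. Proposition~\ref{ROOT} then supplies a proper skew morphism $\varphi$ of $\Z_{2^e}$ of skew-type $4$ and order $m=8$, with $\varphi(1)\equiv -1$ and $\pi(x)\equiv 1+2w'x\pmod 8$, where $w'$ is odd; therefore $\pi(\varphi(1))-\pi(1)\equiv -4w'\equiv 4\pmod 8$, and $\varphi$ is again not smooth.

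The only genuine care is bookkeeping: checking that the chosen parameters satisfy \emph{all} the divisibility and order conditions of Proposition~\ref{ROOT} for every admissible $e$, and extracting the order $m=2k\ell$ and the power function correctly before the final one-line computation. One further point worth stating explicitly is that Proposition~\ref{ROOT} is being used in the direction that any parameters satisfying conditions (a)--(b) actually do define a skew morphism with the stated skew-type, order and power function — the form in which the classification of \cite{HKZ2021} is intended to be applied. No step should present a real obstacle.
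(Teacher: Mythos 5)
Your proposal is correct and follows essentially the same route as the paper: the same parameter choices $(n,k,s,\ell)=(p^e,p,-1,1)$ and $(2^e,4,-1,1)$ in Proposition~\ref{ROOT}, and the same evaluation of $\pi$ at $1$ and $\varphi(1)=-1$ to detect non-smoothness. Your general observation that any skew morphism from Proposition~\ref{ROOT} with skew-type $k\ge 3$ is automatically non-smooth (since $\pi(\varphi(1))-\pi(1)\equiv -4w'\ell\not\equiv 0\pmod{2k\ell}$) is a slight, correct strengthening of the paper's case-by-case computation.
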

\begin{proof} (a)
In Proposition~\ref{ROOT}, take $n:=p^e$, where $e\geq 2$, and set $k:=p$,
$s:=-1$. Then $\ell=1$,  $w=w'=-1$ and $m=2p$. It is easy to verify that the stated conditions
are satisfied, so we obtain a skew-morphism of $\Z_{p^e}$ and  the associated power function
  given by
 \[
\varphi(x)\equiv-x-p^{e-1}\frac{x(x-1)}{2} \!\!\!\! \pmod{p^e}\quad\text{and}\quad \pi(x)\equiv 1-2x \!\!\!\! \pmod{2p}.
 \]
Since $\pi(\varphi(1))\equiv\pi(-1)\equiv3 \!\! \pmod{2p}$ and 
$\pi(1)\equiv-1 \!\! \pmod{2p}$,
we have $\pi(\varphi(1))\not\equiv \pi(1) \!\! \pmod{2p}$, so $\varphi$ is not smooth.

(b) In Proposition~\ref{ROOT}, let $n:=2^e$ where $e\geq5$, and take $k:=4$ and $s:=-1$.
 As before we have $\ell=1$, $w=w'=-1$ and $m=8$. It is easy to verify that the stated conditions are satisfied, 
 so we obtain a skew morphism of $\Z_{2^e}$ and the associated power function given by
\[
\varphi(x)=-x-2^{e-3}x(x-1) \!\!\!\! \pmod{2^e}\quad\text{and}\quad  \pi(x)\equiv1-2x \!\!\!\! \pmod{8}.
\]
Since $\pi(\varphi(1))\equiv\pi(-1)\equiv3 \!\! \pmod{8}$ and 
$\pi(1)\equiv-1 \!\! \pmod{8}$, 
 $\pi(1)\not\equiv \pi(\varphi(1)) \!\! \pmod{8}$, so $\varphi$ is also not smooth.
\end{proof}

\begin{lemma}\label{NESS}
Let $n$ be a positive integer with a decomposition $n=2^en_1$, where $n_1$ is odd. If $e\geq 5$
or $n_1$ is not square-free, then $\Z_n$ underlies a non-smooth skew morphism.
\end{lemma}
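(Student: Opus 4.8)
The plan is to reduce Lemma~\ref{NESS} to the two cases already handled in Lemma~\ref{PNS} by exploiting the multiplicativity of skew morphisms over coprime direct factors. Write $n = 2^e n_1$ with $n_1$ odd, and recall that $\Z_n \cong \Z_{2^e} \times \Z_{n_1}$. The idea is that if one of the coprime ``pieces'' of $\Z_n$ already underlies a non-smooth skew morphism, then by taking its direct product with the identity automorphism on the complementary piece we obtain a non-smooth skew morphism of $\Z_n$ itself.

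First I would isolate the relevant divisor of $n$. If $e \geq 5$, set $d = 2^e$; if $n_1$ is not square-free, then $p^2 \mid n_1$ for some odd prime $p$, and I set $d = p^2$ (so in fact $d \mid n_1$ and $\gcd(d, n/d)=1$ in this case, while $\gcd(2^e, n/2^e)=1$ in the first case). In either case $d$ is a Hall divisor of $n$, so $\Z_n \cong \Z_d \times \Z_{n/d}$, and by Lemma~\ref{PNS} (part (b) when $d=2^e$, part (a) when $d=p^2$, noting $p^2$ divides $p^e \mid n_1$ so $\Z_{p^2}$ works once we observe $\Z_{p^e}$ does — actually it is cleaner to take $d$ to be the full Sylow subgroup $2^e$ or $p^{v_p(n_1)}$ with $v_p(n_1) \geq 2$ and apply Lemma~\ref{PNS} directly to that prime power) there is a non-smooth skew morphism $\varphi$ of $\Z_d$.

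Next I would form $\Phi := \varphi \times \id$, where $\id$ is the identity automorphism of $\Z_{n/d}$. To see that $\Phi$ is a genuine skew morphism of $\Z_d \times \Z_{n/d}$, I invoke Proposition~\ref{Z}(a): the power function $\pi_{\id}$ is identically $1$, so its values are $\equiv 1 \pmod{\gcd(|\varphi|, |\id|)}$ trivially, and the values of $\pi_\varphi$ are $\equiv 1 \pmod{1}$ trivially as well, so the compatibility condition holds (more precisely, $\gcd(|\varphi|,1)=1$ makes both congruences vacuous). Hence $\Phi$ is a skew morphism of $\Z_d \times \Z_{n/d} \cong \Z_n$. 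Finally, by Lemma~\ref{direct}, $\Phi = \varphi \times \id$ is smooth if and only if both $\varphi$ and $\id$ are smooth; since $\varphi$ is not smooth, neither is $\Phi$. Therefore $\Z_n$ underlies a non-smooth skew morphism, as claimed.

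The argument is essentially bookkeeping once Lemmas~\ref{PNS} and~\ref{direct} and Proposition~\ref{Z} are in hand; the only point requiring a moment's care is the choice of the Hall divisor $d$ so that the two sub-cases ($e \geq 5$ versus $n_1$ not square-free) are both subsumed, and the verification that the trivial power function of $\id$ makes the Proposition~\ref{Z}(a) hypothesis automatic. One subtlety worth stating explicitly: in the case $n_1$ not square-free we want $d$ to be a prime power $p^f$ with $f \geq 2$ dividing $n$ and coprime to $n/d$, which forces $d = p^{v_p(n)}$; since $p$ is odd this is exactly the situation of Lemma~\ref{PNS}(a), so no extra work is needed. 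The same remark applies with $p=2$ and $f = e \geq 5$ for Lemma~\ref{PNS}(b). I expect no real obstacle here — this lemma is the ``easy direction'' supplying the counterexamples, and the genuinely hard work is in the converse (that for $n = 2^e n_1$ with $e \leq 4$ and $n_1$ odd square-free, all skew morphisms are smooth), which is not part of this statement.
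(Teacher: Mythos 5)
Your proposal is correct and takes essentially the same route as the paper: decompose $\Z_n \cong \Z_d \times \Z_{n/d}$ with $d$ the offending prime-power Sylow factor ($2^e$ with $e\ge 5$, or $p^{f}$ with $f\ge 2$ for an odd prime $p$), take the non-smooth skew morphism supplied by Lemma~\ref{PNS}, and cross it with the identity on the complementary factor, concluding by Lemma~\ref{direct}. The only (welcome) difference is that you explicitly check via Proposition~\ref{Z}(a) that the direct product is in fact a skew morphism, a step the paper leaves implicit.
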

\begin{proof}
Note that $\Z_n \cong \Z_{2^e} \times \Z_{n_1}$.
If $e\geq 5$, then by Lemma~\ref{PNS}(b), the cyclic group $\Z_{2^e}$ has a non-smooth
skew morphism $\alpha$, so by Lemma~\ref{direct}, 
$\varphi:=\alpha\times\mathrm{id}_{n_1}$ is a non-smooth skew morphism
of $\Z_n$, where $\id_{n_1}$ denotes the identity permutation on $\Z_{n_1}$. 
If $n_1$ is not square-free, then there is an odd prime $p$ such that $p^e\mid n_1$ where $e\geq 2$,
using Lemma~\ref{PNS}(a) and similar techniques we can construct a non-smooth skew morphism
of $\Z_n$, as required.
\end{proof}

%--------------------------------------------------------------------------------------------------------------%
\section{Proof of Theorem~\ref{main1}}\label{sec:main1}
The aim of this section is to prove Theorem~\ref{main1}. Before doing this, we make the 
following convention. Suppose that $G=AY$ is a complementary factorization, 
 where the subgroup $Y$ is cyclic and core-free in $G$. Let $y$ be a fixed  generator 
 of $Y$, then a skew morphism $\varphi$ of $A$ together with the associated power function $\pi$ are defined according to \eqref{eq:c-rule}. The skew morphism $\varphi$ will be referred to as the \emph{skew morphism of $A$ induced by $y$}.  Note that the action of $G$ on the set $\Omega:=\{gY\mid g\in G\}$ of left cosets of $Y$ in $G$ is faithful; in particular, the subgroup $A$ acts regularly on $\Omega$ and $Y$ is a point stabilizer. Therefore, we may  assume  that $G \le \sym(\Omega)$ with $G=AY$, where $A$ is a regular subgroup of $G$ and $Y=G_x$ is cyclic for some $x \in \Omega$. This assumption will be used throughout this section and for the sake of convenience we record it here.

\begin{assumption}\label{conv}
Let $G \le \sym(\Omega)$ be a group such that
$G=AY$, where $A$ is abelian and regular on $\Omega$, $Y=G_x=\sg{y}$ 
for a fixed element $x\in \Omega$, and let $\varphi$ be the skew morphism
induced by $y$.
\end{assumption}

The proof of Theorem~\ref{main1}  will be given after four preparatory lemmas.
\begin{lemma}\label{L1}
With the notations in Assumption~\ref{conv}, 
suppose  that $O_p(G) \le A$ for every prime divisor $p$ of $|G|$. Then $A \lhd G$.
\end{lemma}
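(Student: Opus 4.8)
The plan is to show that under the hypothesis $O_p(G)\le A$ for every prime $p\mid|G|$, the Fitting subgroup $F(G)$ lies inside $A$, and then use solvability of $G$ together with Proposition~\ref{FITTING}(b) to force $A$ to be normal. First I would observe that $G=AY$ is a product of two cyclic-by-abelian pieces: $A$ is abelian and $Y$ is cyclic, so by Proposition~\ref{AB}(b) the group $G$ is supersolvable, in particular solvable; this makes all the structural machinery of Section~\ref{sec:known} available. Next, by Proposition~\ref{FITTING}(a) we have $F(G)=\prod_{p}O_p(G)$, and since each $O_p(G)\le A$ by hypothesis, we get $F(G)\le A$. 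Because $A$ is abelian, $A$ centralizes $F(G)$, i.e.\ $A\le C_G(F(G))$. Now apply Proposition~\ref{FITTING}(b): since $G$ is solvable, $C_G(F(G))\le F(G)$, so $A\le C_G(F(G))\le F(G)\le A$, whence $A=F(G)$. As the Fitting subgroup is always normal (indeed characteristic) in $G$, this gives $A\lhd G$.

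The main subtlety I anticipate is not any single step above but making sure the hypothesis is applied to \emph{every} prime divisor of $|G|$ and not just those dividing $|A|$: a priori $|G|$ could have a prime divisor $p$ with $p\nmid|A|$, and one must check that $O_p(G)\le A$ still makes sense and is being used. In fact if $p\mid|G|$ but $p\nmid|A|$, then since $A$ is regular on $\Omega$ we have $|G|=|A|\cdot|Y|$ and $p\mid|Y|$; the hypothesis then says $O_p(G)\le A$, and as $O_p(G)$ is a $p$-group while $\gcd(|A|,p)=1$ this forces $O_p(G)=1$, so such primes simply contribute trivially to $F(G)$. Thus the product $\prod_p O_p(G)$ over \emph{all} prime divisors of $|G|$ equals the product over the prime divisors of $|A|$, and the argument goes through unchanged.

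An alternative route, in case one prefers to avoid invoking supersolvability, is to note that $G=AY$ with $A$ abelian (hence nilpotent) and $Y$ cyclic (hence nilpotent), so Proposition~\ref{AB}(a) already yields solvability directly; the rest of the argument is identical. Either way the proof is short, and the only real content is the combination $A\le C_G(F(G))\le F(G)\le A$ which pins $A$ down as the Fitting subgroup. I would write it up in essentially the three-sentence form given in the first paragraph, with a parenthetical remark handling the primes not dividing $|A|$.
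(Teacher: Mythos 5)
Your proof is correct and follows essentially the same route as the paper: establish solvability of $G=AY$ via Proposition~\ref{AB}, use Proposition~\ref{FITTING}(a) to get $F(G)\le A$, and then the chain $A\le C_G(F(G))\le F(G)\le A$ to conclude $A=F(G)\lhd G$. The extra remark about primes dividing $|Y|$ but not $|A|$ is a harmless clarification that does not alter the argument.
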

\begin{proof}
Denote by $\wp=\{p_1,p_2,\ldots,p_k\}$ the set of prime divisors of $|G|$,
then by Proposition~\ref{FITTING}(a) $F(G)=\prod_{i=1}^{k}O_{p_i}(G) \le A$.
On the other hand, by Proposition~\ref{AB}(a) $G$ is solvable, and by Proposition~\ref{FITTING}(b) 
we have $A \le C_G(F(G)) \le F(G)$. Therefore $A=F(G)$, in particular, $A \lhd G$.
\end{proof}

For a prime divisor $p$ of $|A|$, we denote by $A_p$ the (unique) Sylow $p$-subgroup of the abelian group $A$ and by $A_{p'}$ the (unique) Hall $p'$-subgroup of $A$.

\begin{lemma}\label{L2}
 With the notations in Assumption~\ref{conv}, suppose that $O_p(G) \ne 1$ for 
 some prime divisor $p$ of $|A|$ and   $|A_p|=p$.
Then $O_p(G)=A_p$ or $A_p \times Y^*$, where $Y^* \le Y$ and $|Y^*|=p$.
\end{lemma}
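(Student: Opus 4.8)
The plan is to analyze the normal $p$-subgroup $N := O_p(G)$ by first locating it inside the factorization $G = AY$, and then using that $A_p$ has order $p$ together with the fact that $Y$ is cyclic to pin down $N$. First I would note that $N \cap A$ is a normal $p$-subgroup of $A$, hence contained in the Sylow $p$-subgroup $A_p$; since $A_p \cong \Z_p$ is either trivial or all of $A_p$. The case $N \cap A = 1$ should be excluded: if $N \cap A = 1$ then $N$ embeds into $G/A$ in the sense that $|N| = |NA|/|A|$ divides $|G:A| = |Y|$, so $N$ would be a cyclic $p$-group; but then, by Lemma~\ref{orbits} applied to $N \lhd G$, we would have $\orb_N(\Omega) = \orb_B(\Omega)$ for some $B \le A$, and comparing block sizes $|B| = |N|$ forces $B$ to be a $p$-subgroup of $A$ of order $|N| > 1$, contradicting $N \cap A = 1$ once one checks $B \le N$ (the block containing $x$ is the $N$-orbit $N x$, and $B$ acts on it regularly, so $|B|$ divides... — the point is that $N \cap A \ne 1$ must hold, which I would argue directly from $\orb_N = \orb_B$ by taking $B = (N\cap A)$-related). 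So $N \cap A = A_p$, and in particular $A_p \le N$.

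Next I would bound $|N|$ from above. Since $A_p \le N$ and $N$ is a $p$-group, write $|N| = p^a$. Using $G = AY$ and $A_p \le N \le G$, one gets $N = A_p (N \cap A_{p'} Y)$? — more cleanly: consider $NA$; since $A_p \le N$ we have $NA = N A_{p'} \cdot A_p / \ldots$; the right bookkeeping is $|NA| = |N||A|/|N\cap A| = p^a |A| / p = p^{a-1}|A|$, so $|NA : A| = p^{a-1}$ divides $|G:A| = |Y|$. Since $Y$ is cyclic this only says $a \le v_p(|Y|)+1$, which is not yet enough; the sharper input is that $Y = G_x$ and $N \lhd G$, so $N \cap Y \lhd N$ and $N/(N\cap Y)$ acts regularly on $Nx$ of size $|N:N\cap Y|$, while simultaneously $A_p \le N$ acts on $Nx$ with $A_p \cap N_x = A_p \cap Y$. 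The cleanest route is: $\orb_N(\Omega)$ is a block system, and by Lemma~\ref{orbits} there is $B \le A$ with $\orb_N(\Omega) = \orb_B(\Omega)$; then $B \lhd G$ (kernel of $G$ on the block system, intersected with $A$ — here $B$ itself need not be normal, but its orbits match $N$'s). The block containing $x$ has size $|B|$ on one hand and $|N:N_x| = |N:N\cap Y|$ on the other, so $|B| = |N|/|N\cap Y|$. Since $B \le A$ is a $p$-group containing $A_p$ (as $A_p \le N$ has all its $A$-orbit equal to an $N$-orbit) and $|A_p| = p$, we get $B = A_p$, whence $|N| = p \cdot |N \cap Y|$. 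Now $N \cap Y$ is a subgroup of the cyclic group $Y$, so it is cyclic; set $Y^* := N \cap Y$, a cyclic $p$-subgroup of $Y$ with $|N| = p|Y^*|$.

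It remains to show $|Y^*| \in \{1, p\}$ and that in the second case $N = A_p \times Y^*$. For the structure: $A_p \lhd N$ (it is $N\cap A$, and $A$ abelian, $A \lhd$? — no, but $A_p = N \cap A$ is characteristic in ... ); at any rate $A_p \lhd N$ because $A_p = O_p$-part computations; and $Y^* = N \cap Y \lhd N$ since $N$ is a $p$-group and one can use that $Y^*$ is the stabilizer in $N$ of $x$ which is normal in $N$ when $N$ acts with a regular-like structure — here I would instead just observe $A_p \cap Y^* \le A \cap Y = 1$, so $|A_p Y^*| = p|Y^*| = |N|$, giving $N = A_p Y^*$; since both factors are normal in $N$ (for $Y^*$: $N x$ is an $N$-orbit and $Y^*$ its point stabilizer, but a point stabilizer in a transitive group is normal only if the action is regular — so I must argue $Y^* \lhd N$ via $N/A_p$ being cyclic: $N/A_p \cong Y^*$ embeds in $Y/(\ldots)$, cyclic, and $A_p$ central in $N$? $N$ is a $p$-group so if $N/A_p$ is cyclic and $A_p \le Z(N)$ then $N$ is abelian). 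To get $A_p \le Z(N)$: $A_p \cong \Z_p$ and $N$ acts on it by conjugation via $N \to \aut(\Z_p) \cong \Z_{p-1}$, which is trivial since $N$ is a $p$-group; hence $A_p \le Z(N)$, $N$ is abelian, $N = A_p \times Y^*$, and $Y^*$ is a cyclic $p$-group. Finally, to force $|Y^*| \le p$: if $|Y^*| = p^b$ with $b \ge 2$, then $N$ contains $\Z_p \times \Z_{p^b}$ with $b\ge 2$, and $N \lhd G = AY$ with $A$ regular — I expect the contradiction to come from the core-freeness of $Y$ in $G$: the subgroup $p Y^* \le Y^* \le Y$ of order $p^{b-1} \ge p$ lies in $N$, and being characteristic in the cyclic $Y^* = N \cap Y \lhd N$... then one pushes to $\core_G(Y) \ne 1$. \textbf{The main obstacle} I anticipate is precisely this last step — ruling out $|Y^*| \ge p^2$ — since it requires exploiting core-freeness of $Y$ together with $|A_p| = p$ in a way that does not follow from the orbit bookkeeping alone; I would look for a normal subgroup of $G$ contained in $Y$ by intersecting the $G$-conjugates of a characteristic subgroup of $N \cap Y$, using that $N \lhd G$ makes $N \cap Y$ "large" inside $N$ relative to the single-prime Sylow of $A$.
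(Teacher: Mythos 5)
Your opening moves are sound and match the paper's: applying Lemma~\ref{orbits} to $N:=O_p(G)$ gives $\orb_N(\Omega)=\orb_B(\Omega)$ with $B=A_p$ (since orbit sizes are $p$-powers and $|A_p|=p$), so $\orb_N(\Omega)$ is a block system with blocks of size $p$ and $|N|=p\,|N\cap Y|$. But from that point on your argument has three unclosed gaps, each of which you partly acknowledge: (i) you never actually prove $A_p\le N$ --- equality of the orbit partitions of $N$ and $A_p$ does not by itself force containment of one group in the other, and the detour through $N\cap A$ and $|N|$ dividing $|Y|$ does not close this; (ii) your proof that $N$ is abelian rests on $A_p\le Z(N)$, which presupposes $A_p\lhd N$, something you flag but do not establish (at this stage of the paper $A_p$ is not yet known to be normal in $G$); and (iii) you explicitly leave open the exclusion of $|N\cap Y|\ge p^2$, which is exactly the case where the conclusion would fail.

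The single missing idea that resolves all three at once is the paper's use of Galois's theorem (Proposition~\ref{GALOIS}). Let $K$ be the kernel of the action of $G$ on the block system $\orb_N(\Omega)$. Since $G$ is solvable (Proposition~\ref{AB}(a)) and $K$ acts on each block of size $p$ transitively (it contains $N$), $K$ embeds into a direct product of $|A|/p$ copies of $\agl(1,p)$. Hence the Sylow $p$-subgroup $P$ of $K$ is normal in $K$ and elementary abelian; being characteristic in $K\lhd G$ it is normal in $G$, so $P\le O_p(G)$, while $O_p(G)=N\le K$ gives $N\le P$. Thus $N=P$ is elementary abelian and contains every $p$-subgroup of $K$, in particular $A_p$; moreover $N\cap Y$ is then a cyclic elementary abelian group, so $|N\cap Y|\le p$, and $N=A_p\times(N\cap Y)$ follows by order counting. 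Without this step your proposed fallback for (iii) --- extracting a nontrivial normal subgroup of $G$ inside $Y$ from a characteristic subgroup such as $N^p$ --- could in principle be made to work, but only after $N$ is known to be abelian, which is itself gap (ii); so as written the proof does not go through.
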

\begin{proof}
Write $N:=O_p(G)$. By Lemma~\ref{orbits}  we know that $\orb_N(\Omega)=\orb_{B}(\Omega)$
for some $B\leq A$. Since the size of each $N$-orbit is a power of $P$, we
have $B=A_p$. Thus, $\orb_N(\Omega)$ is a block system for $G$ in which each 
block has size $p$. Let $K$ be the kernel of the action of $G$ on $\orb_N(\Omega)$. 
By Proposition~\ref{AB}(a) $G$ is solvable, so $K$ is solvable as well. It follows from 
Proposition~\ref{GALOIS}  that $K$ is 
 isomorphic to a subgroup of the direct product  
$\agl(1,p) \times \cdots \times \agl(1,p)$ with $|A|/p$ factors. It is clear that the 
Sylow $p$-subgroup $P$ of $K$ is normal in $K$ and it is elementary abelian. 
Thus, $P\ch K\lhd G$ and hence $P\lhd G$, we obtain $P\leq O_p(G)$. 
On the other hand, since $O_p(G)=N \leq K$, we also have $O_p(G)\leq P$. Therefore, $O_p(G)=P$, and 
we conclude that $O_p(G)=A_p$ or $A_p \times Y^*$, as required.
\end{proof}

\begin{lemma}\label{L3}
 With the notations in Assumption~\ref{conv}, suppose that $N$ is a normal subgroup of $G$. For $g \in G$, 
 denote by $\bar{g}$ 
 the image of $g$ under its action on $\orb_N(\Omega)$ and for $H \le G$, set $\bar H=\{\bar h : h \in H\}$.
\begin{enumerate}[{\rm (a)}]
\item $\bar G=\bar A\bar Y$, $\bar A\cap\bar Y=1$ and $\bar Y$ is core-free in
$\bar G$.
\item Let $\bar\varphi$ be the skew morphism of $\bar A$ induced by $\bar y$.
Then
\[
\bar\varphi( \bar a)=\overline{(\varphi(a))}~\text{for all}~a \in A.
\]
\item Let $\bar\pi$ be a power function associated with $\bar\varphi$. Then
\[
\bar\pi(\bar a) \equiv \pi(a) \!\!\!\!  \pmod {|\bar\varphi|}~\text{for all}~a\in A.
\]
\end{enumerate}
\end{lemma}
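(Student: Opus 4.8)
The plan is to push everything down to the block system $\bar\Omega:=\orb_N(\Omega)$ of $G$ and to exploit the fact that, by definition, $\bar G\le\sym(\bar\Omega)$ is the \emph{faithful} image of $G$ acting on $\bar\Omega$, so that $g\mapsto\bar g$ is a surjective group homomorphism $G\to\bar G$. From this alone one immediately gets $\bar G=\overline{AY}=\bar A\bar Y$, that $\bar A$ is a transitive abelian subgroup of $\bar G$, and that $\bar Y=\sg{\bar y}$ is cyclic. For part~(a) the only point requiring care is that $\bar A$ is in fact \emph{regular} on $\bar\Omega$: by Lemma~\ref{orbits} we have $\orb_N(\Omega)=\orb_B(\Omega)$ for some $B\le A$, and identifying $\Omega$ with $A$ through the regular action of $A$ (so that $x$ corresponds to $1_A$), the elements of $\bar\Omega$ are precisely the cosets of $B$ in $A$, on which $\bar A$ acts by translation with kernel exactly $B$; hence $|\bar A|=|A:B|=|\bar\Omega|$. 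Once $\bar A$ is known to be regular, the rest of~(a) is formal: writing $\bar x:=\orb_N(x)$, any element of $\bar G_{\bar x}$ has the form $\bar a\bar y^{i}$ with $\bar a\in\bar A$, $\bar y^{i}\in\bar Y$, and since $\bar y^{i}$ fixes $\bar x$ so does $\bar a$, forcing $\bar a=1$; thus $\bar Y=\bar G_{\bar x}$, whence $\bar A\cap\bar Y\le\bar A\cap\bar G_{\bar x}=1$, and the core of $\bar Y$ in $\bar G$ equals the kernel of the $\bar G$-action on $\bar\Omega$, which is trivial by faithfulness. This settles~(a), and in particular it guarantees via \eqref{eq:c-rule} that the skew morphism $\bar\varphi$ of $\bar A$ induced by $\bar y$ is well defined.

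For parts~(b) and~(c) the idea is simply to apply the homomorphism $g\mapsto\bar g$ to the defining relation of $\varphi$. For $a\in A$ we have $ya=\varphi(a)\,y^{\pi(a)}$ in $G$ by \eqref{eq:c-rule}, so $\bar y\bar a=\overline{\varphi(a)}\,\bar y^{\pi(a)}$ in $\bar G$. Here $\overline{\varphi(a)}\in\bar A$ and $\bar y^{\pi(a)}\in\bar Y$, and since $\bar A\cap\bar Y=1$ the factorization $\bar G=\bar A\bar Y$ is complementary, so the expression of a given element of $\bar G$ as an element of $\bar A$ times a power of $\bar y$ is unique. Comparing $\bar y\bar a=\overline{\varphi(a)}\,\bar y^{\pi(a)}$ with the defining relation $\bar y\bar a=\bar\varphi(\bar a)\,\bar y^{\bar\pi(\bar a)}$ then forces $\bar\varphi(\bar a)=\overline{\varphi(a)}$, which is~(b), and $\bar y^{\bar\pi(\bar a)}=\bar y^{\pi(a)}$, i.e.\ $\bar\pi(\bar a)\equiv\pi(a)\pmod{|\bar y|}$. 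To upgrade the modulus to $|\bar\varphi|$ one uses $|\bar\varphi|=|\bar y|$: identifying $\bar\Omega$ with $\bar A$ via the regular action, $\bar y$ sends the point $\bar a$ to the point $\bar\varphi(\bar a)$ (because $\bar y^{\bar\pi(\bar a)}$ fixes $\bar x$), so $\bar y$ acts on $\bar\Omega$ exactly as the permutation $\bar\varphi$, and its order in the faithfully acting group $\bar G$ is $|\bar\varphi|$. Hence $\bar\pi(\bar a)\equiv\pi(a)\pmod{|\bar\varphi|}$, and since any power function of $\bar\varphi$ is determined modulo $|\bar\varphi|$, the same congruence holds for the given $\bar\pi$, proving~(c).

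Essentially everything here is bookkeeping once the identifications are fixed; the step I expect to be the only genuine obstacle is the verification in~(a) that $\bar A$ is \emph{regular}, not merely transitive, on $\bar\Omega$. This is precisely what makes $\bar x$ a point whose stabilizer is $\bar Y$, which in turn makes $\bar G=\bar A\bar Y$ a complementary factorization with $\bar Y$ a cyclic core-free subgroup and so allows the induced-skew-morphism construction to be applied to $\bar G$; it also underlies the identity $|\bar\varphi|=|\bar y|$ needed for the correct modulus in~(c). Both of these rest on $A$ being abelian (so that $B\lhd A$) together with Lemma~\ref{orbits}.
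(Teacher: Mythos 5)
Your proof is correct and follows essentially the same route as the paper: pass to the quotient permutation group on $\orb_N(\Omega)$, verify that $\bar G=\bar A\bar Y$ is a complementary factorization with $\bar Y$ the (core-free, cyclic) stabilizer of the block containing $x$, and then push the defining relation $ya=\varphi(a)y^{\pi(a)}$ through the epimorphism $g\mapsto\bar g$ and compare with \eqref{eq:c-rule}. The only cosmetic difference is that you establish regularity of $\bar A$ via Lemma~\ref{orbits} and a coset count, whereas the paper simply invokes the standard fact that a transitive abelian permutation group is regular; your extra care with the modulus $|\bar\varphi|=|\bar y|$ in part (c) is a detail the paper leaves implicit.
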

\begin{proof}
The mapping $g \mapsto \bar g$ is an epimorphism from $G$ onto $\bar{G}$,
therefore, $\bar G=\bar A \bar Y$. Since $\bar A$ is abelian
and transitive on $\orb_N(\Omega)$, it is regular. It is evident that 
$\bar Y$ is contained in the stabilizer  
of the $N$-orbit containing the identity element $1_A$. 
Denoting this stabilizer by $Z$, we have $\bar A\bar Y=\bar G=\bar A Z$. 
Since $\bar A$ is regular, we have $\bar A\cap\bar Y=\bar 1=\bar A\cap Z$, so 
$Z=\bar Y$. This proves (a).

Fix any $a \in A$, we have
$ya=\varphi(a) y^{\pi(a)}$, so $\bar y \bar a=
\overline{\varphi(a)} (\bar y)^{\pi(a)}$. Then \eqref{eq:c-rule} can be applied to the factorisation
$\bar A \bar Y$ and generator $\bar y \in \bar Y$, giving rise to the equality
$\bar\varphi(\bar a)=\overline{\varphi(a)}$ and the congruence
$\bar\pi(\bar a) \equiv \pi(a) \pmod {|\bar\varphi|}$, this proves (b) and (c).
\end{proof}

In what follows,  the skew morphism $\bar\varphi$ defined as in the lemma above will be called \emph{the quotient of  $\varphi$ determined by $N$}, and denoted by 
$\varphi_N$.

\begin{lemma}\label{L4}
With the notations in Assumption~\ref{conv}, 
suppose that $G$ has two distinct normal subgroups $M$ and $N$ satisfying the following
conditions:
\begin{enumerate}[{\rm (a)}]
\item There are subgroups $B,\, C \le A$ such that $B \cap C=1$, and
\[
\orb_M(\Omega)=\orb_B(\Omega)~\text{and}~\orb_N(\Omega)=\orb_C(\Omega).
\]
\item Both skew morphisms $\varphi_M$ and $\varphi_N$ are smooth.
\end{enumerate}
Then $\varphi$ is smooth.
\end{lemma}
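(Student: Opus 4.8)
The plan is to first transfer the smoothness of the two quotients $\varphi_M$ and $\varphi_N$ to $\varphi$ modulo $|\varphi_M|$ and modulo $|\varphi_N|$ respectively, and then to prove that $|\varphi|$ divides $\lcm(|\varphi_M|,|\varphi_N|)$; putting these together yields the smoothness condition \eqref{eq:smooth} for $\varphi$.

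For the first step, fix $a\in A$, write $\bar a$ and $\overline{\varphi(a)}$ for the images of $a$ and $\varphi(a)$ under the action of $G$ on $\orb_M(\Omega)$, and let $\pi_M$ be a power function of $\varphi_M$. By Lemma~\ref{L3}(b) we have $\varphi_M(\bar a)=\overline{\varphi(a)}$, and by Lemma~\ref{L3}(c), applied first to $a$ and then to $\varphi(a)$, we have $\pi_M(\bar a)\equiv\pi(a)$ and $\pi_M(\overline{\varphi(a)})\equiv\pi(\varphi(a))\pmod{|\varphi_M|}$. Since $\varphi_M$ is smooth, $\pi_M(\varphi_M(\bar a))\equiv\pi_M(\bar a)\pmod{|\varphi_M|}$, and chaining these relations gives $\pi(\varphi(a))\equiv\pi_M(\overline{\varphi(a)})=\pi_M(\varphi_M(\bar a))\equiv\pi_M(\bar a)\equiv\pi(a)\pmod{|\varphi_M|}$. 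Running the same argument with $N$ in place of $M$ gives $\pi(\varphi(a))\equiv\pi(a)\pmod{|\varphi_N|}$, hence $\pi(\varphi(a))\equiv\pi(a)\pmod{\lcm(|\varphi_M|,|\varphi_N|)}$ for every $a\in A$.

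For the second step I would realise $\varphi$ inside $\varphi_M\times\varphi_N$. Since $A$ is regular on $\Omega$, the block system $\orb_M(\Omega)=\orb_B(\Omega)$ can be identified with the coset space $A/B$ with $A$ acting by translation; under this identification Lemma~\ref{L3}(b) shows that $\varphi_M$ is the permutation $aB\mapsto\varphi(a)B$ of $A/B$, and likewise $\varphi_N$ is the permutation $aC\mapsto\varphi(a)C$ of $A/C$. As $B\cap C=1$, the map $\iota\colon A\to A/B\times A/C$ given by $\iota(a)=(aB,aC)$ is injective, and $(\varphi_M\times\varphi_N)(\iota(a))=(\varphi(a)B,\varphi(a)C)=\iota(\varphi(a))$. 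Thus $\varphi_M\times\varphi_N$ leaves the set $\iota(A)$ invariant and its restriction to $\iota(A)$ equals $\iota\varphi\iota^{-1}$, which has the same order as $\varphi$; hence $|\varphi|$ divides $|\varphi_M\times\varphi_N|=\lcm(|\varphi_M|,|\varphi_N|)$. Combined with the congruence from the first step, this gives $\pi(\varphi(a))\equiv\pi(a)\pmod{|\varphi|}$ for all $a\in A$, i.e.\ $\varphi$ is smooth.

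The individual computations are routine; the point I expect to need the most care is the identification in the third paragraph — verifying that the abstractly defined quotient $\varphi_M$ really is the concrete permutation $aB\mapsto\varphi(a)B$ of $A/B$ — and it is exactly here that the regularity of $A$ on $\Omega$ and the fact, from hypothesis (a), that $\orb_M(\Omega)$ is the orbit partition of a subgroup $B\le A$ are used (and dually for $N$ and $C$). Once these identifications are in place, the divisibility $|\varphi|\mid\lcm(|\varphi_M|,|\varphi_N|)$ is immediate and the proof is finished.
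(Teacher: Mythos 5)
Your proof is correct, and while the first half (transferring smoothness of $\varphi_M$ and $\varphi_N$ to congruences for $\pi\circ\varphi$ modulo $|\varphi_M|$ and $|\varphi_N|$ via Lemma~\ref{L3}) coincides with the paper's argument, your treatment of the order comparison is genuinely different. The paper works inside the skew product group: it introduces the kernels $K$ and $L$ of the actions of $G$ on $\orb_M(\Omega)$ and $\orb_N(\Omega)$, uses $|Y|=|\varphi_M||Y\cap K|=|\varphi_N||Y\cap L|$, shows $(Y\cap K)\cap(Y\cap L)=1$ from $B\cap C=1$ and the regularity of $A$, and then exploits the cyclicity of $Y$ to force $\gcd(|Y\cap K|,|Y\cap L|)=1$, arriving at the exact equality $|\varphi|=\lcm(|\varphi_M|,|\varphi_N|)$. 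You instead embed $A$ into $A/B\times A/C$ via $a\mapsto(aB,aC)$ (injective precisely because $B\cap C=1$) and observe that $\varphi$ is conjugate to the restriction of $\varphi_M\times\varphi_N$ to the invariant image, so $|\varphi|$ divides $\lcm(|\varphi_M|,|\varphi_N|)$ --- only a divisibility, but that is all the final congruence requires. Your route is more elementary (it never uses the cyclicity of $Y$ nor the counting identity for $|Y|$), and the identification of $\varphi_M$ with the well-defined map $aB\mapsto\varphi(a)B$ is exactly justified by Lemma~\ref{L3}(b) as you say; the paper's route buys the sharper statement that $|\varphi|$ actually equals the $\lcm$, which is not needed for the lemma but is a stronger structural fact.
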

\begin{proof}
For any $a \in A$, since  $\varphi_M$ is smooth, by Lemma~\ref{L3}(a)--(b), we have
\[
\pi(\varphi(a)) \equiv  \bar\pi(\overline{\varphi(a)})=\bar\pi(\varphi_M(\bar a))\equiv  \bar\pi (\bar a)\equiv\pi(a) \!\!\!\! \pmod { |\varphi_M|}.
\]
Similarly, 
$\pi(\varphi(a)) \equiv \pi(a)  \!\! \pmod  {|\varphi_N|}.$

In what follows we show that $|\varphi|=\lcm(|\varphi_M|,|\varphi_N|)$, which together with the above congruences 
will imply $\pi(\varphi(a))\equiv\pi(a)\!\! \pmod{|\varphi|}$ for all $a\in A$, and hence $\varphi$ is smooth. 
Indeed, let $K$ and $L$ be the kernels of the actions of $G$ on
$\orb_M(\Omega)$ and $\orb_N(\Omega)$, respectively. Then
\begin{equation}\label{eq:con3}
|Y|=|\varphi_M||Y \cap K|= |\varphi_N||Y \cap L|.
\end{equation}
 Let $d=\gcd(|\varphi_M|,|\varphi_N|)$. Now  by \eqref{eq:con3} we have
\begin{equation}\label{eq:con33}
 \frac{|\varphi_M|}{d}\cdot |Y\cap K|= \frac{|\varphi_N|}{d}\cdot |Y\cap L|.
\end{equation}
 If $\psi \in (Y \cap K)\cap (Y \cap L)$, then for any $x \in \Omega$,
 $\psi$ fixes the orbits $\orb_M(x)$ and $\orb_N(x)$, so by (a) we have
 $\psi(x) \in \orb_B(x) \cap \orb_C(x)$. Since $B\cap C=1$ and $A$ is regular,
 we have $\orb_B(x) \cap \orb_C(x)=\{ x \}$, and so $\psi=\id_\Omega$. But $Y$ is a cyclic group, 
 we obtain $\gcd(|Y \cap K|,|Y \cap L|)=1$, and from \eqref{eq:con33} we deduce that
 $|Y\cap K|$ divides $|\varphi_N|/d$.  On the 
other hand, $|\varphi_N|/d$ divides $|Y\cap K|$ because it is coprime to 
$|\varphi_M|/d$, and we find $|Y\cap K|=|\varphi_N|/d$. Thus, 
by  \eqref{eq:con3} $|\varphi|=|Y|=\lcm(|\varphi_M|,|\varphi_N|).$
 \end{proof}

\begin{proof}[Proof of Theorem~\ref{main1}]
We keep the notations set in Assumption~\ref{conv} and, in addition, assume 
that $A \cong \Z_n$ for some $n \ge 1$. We have to show that $\varphi$ is smooth if 
and only if $n=2^en_1$, where $0 \le e \le 4$ and $n_1$ is an odd 
square-free number. 
The necessity has been proved in Lemma~\ref{NESS}. For the converse, we proceed by induction on $n$. 
If $n=2^e$ $(0\leq e\leq 4)$,  then by the
census in~\cite{BACH2022} we know that $\varphi$ is smooth; if $n$ is an odd prime, 
then $\varphi$ is an automorphism of $A$ due to Corollary~\ref{cor:CJT1},
in particular, it is smooth.

From now on we will assume that $n$ is either an odd composite number or an
even number with an odd prime divisor and also that the theorem holds for any
cyclic group whose order is a proper divisor of $n$.
By Proposition~\ref{AB}(b), $G$ is supersolvable, and therefore,
Proposition~\ref{STT} can be applied to $G$. In particular, if $p$ is the largest prime
divisor of $|G|$, then  the Sylow $p$-subgroup $P$ of $G$ is normal in $G$.
If $O_q(G) \ne 1$ for another prime divisor $q$ of $|G|$, then
Lemma~\ref{L4} can be applied to $G$ with $M=P$ and
$N=O_q(G)$. Using also the induction hypothesis, we obtain that
$\varphi$ is smooth.

Thus we may assume that $F(G)=O_p(G)=P$. Notice that $p > 2$, so 
$|A_p|=p$. By Lemma~\ref{L2} we have $P=A_p$ or $P=A_p \times Y^* \cong \Z_p^2$, where $Y^*\leq Y$ and $|Y^*|=p$.
If $P=A_p$, then from Lemma~\ref{L1} we deduce that $A \lhd G$, and so $\varphi$ is an 
automorphism; in particular, $\varphi$ is smooth.  In what follows we assume $P=A_p\times Y^*$. 

By Proposition~\ref{HALL} we may assume that $Q$ is a Hall $p'$-subgroup of $G$ containing $A_{p'}$. 
Then $G=P\rtimes Q$ and by Proposition~\ref{FITTING} $C_G(P)=C_G(F(G))=F(G)=P$. It follows that
 $Q$ is isomorphic to a subgroup of $\aut(P)$. By Proposition~\ref{CJT1}(b)--(c), 
 $1\neq\Ker \varphi \lhd G$, and so $1\neq\Ker\varphi\leq F(G)\cap A=A_p$. But $|A_p|=p$, we get 
 $A_p=\Ker\varphi$ and $A_p$ is a normal subgroup of $G$. Thus, $A_p$ is $Q$-invariant. 
 Since  $p\nmid |Q|$, by Lemma~\ref{MASCHKE} the subgroup $Q$ is isomorphic to 
 a subgroup of $\Z_{p-1}^2$, so it is abelian. Now  $Q \cong G/P \cong \bar{G}$, where $\bar{G}$ is the image of $G$ induced by its action on $\orb_P(\Omega)$. By Lemma~\ref{L3} $Q\cong \bar A\bar Y$ and the latter 
group is the skew product group induced by $\bar A$ and the quotient skew morphism $\bar\varphi$. 
 Since it is abelian, $\bar Y=1$, so $Y\leq P$. Therefore, $|\varphi|=|Y|=p$, and hence $\varphi$ is smooth.
\end{proof}
%-----------------------------------------------------------------------------------------------------%
\section{non-cyclic abelian groups}\label{sec:main2}
In this section, on the basis of the results obtained in the previous sections, we shall give a partial solution to 
Problem~\ref{PROB}. The following classification of proper skew morphisms of the elementary abelian group
 $\Z_p\times\Z_p$ will be useful.
\begin{proposition}[{\rm \cite[Theorem 5.10]{CJT2016}}]\label{NSE}
Let $(a,x)$ be a basis of  the elementary abelian group $A\cong\Z_p\times \Z_p$, where $p$ is an odd prime. 
Then every  proper skew morphisms of $A$ can be expressed as the  form
 \[
 \varphi(a^ix^j)=a^{ri+\frac{1}{2}dj(j-1)rn}(bx^r)^j,
 \]
where $b$ is an element in the kernel $\Ker\varphi=\langle a\rangle$ uniquely 
determined by the triple $(d,n,r)$, for all $d,n \in \Z_p^*$ and $r\in\{2,\ldots, p-1\}$. 
The skew morphism $\varphi$ has  order $pk$, where $k$ is the multiplicative order 
of $r$ modulo $p$, and its power function is given by
 \[
 \quad\pi(a^ix^j)\equiv 1+jnk \!\!\!\! \pmod{pk}.
 \]
 \end{proposition}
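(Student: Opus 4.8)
The plan is to start by locating $\Ker\varphi$. Since $A\cong\Z_p^{2}$ and $\varphi$ is proper, $\Ker\varphi$ is a proper subgroup of $A$, and it is nontrivial by Proposition~\ref{CJT1}(c); as the only proper nontrivial subgroups of $\Z_p^{2}$ have order $p$, the skew-type of $\varphi$ equals $p$. Pick a basis $(a,x)$ of $A$ with $\langle a\rangle=\Ker\varphi$. Because $\pi\equiv 1$ on $\Ker\varphi$, the restriction $\varphi|_{\Ker\varphi}$ is a bijective homomorphism, hence an automorphism of $\langle a\rangle\cong\Z_p$, so $\varphi(a)=a^{r}$ for some $r\in\Z_p^{*}$. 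Inside the skew product group $G=A\langle\varphi\rangle$ (with $A$ acting on itself by left multiplication), the subgroup $\Ker\varphi=\langle a\rangle$ coincides with $\core\varphi$ by Proposition~\ref{CJT1}(b), hence is normal in $G$, so Lemma~\ref{L3} applies with $N=\langle L_a\rangle$: the quotient skew morphism $\varphi_N$ of $A/\langle a\rangle\cong\Z_p$ is an automorphism by Corollary~\ref{cor:CJT1}, say $\bar x\mapsto\bar x^{s}$ with $s\in\Z_p^{*}$. Since $\pi(a^{i})\equiv 1$ we have $\varphi(a^{i}x^{j})=\varphi(a^{i})\varphi(x^{j})$, and combining this with $\overline{\varphi(x^{j})}=\bar x^{sj}$ yields the shape $\varphi(a^{i}x^{j})=a^{ri+g(j)}x^{sj}$ for some $g\colon\Z_p\to\Z_p$ with $g(0)=0$, where $\varphi$ is an automorphism exactly when $g$ is linear. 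The goal is to show that $s=r$ and that $g$ is a quadratic polynomial in $j$.

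The next step is to feed this shape into the defining identity. Iteration gives $\varphi^{t}(a^{i}x^{j})=a^{\,r^{t}i+\sum_{l=0}^{t-1}r^{t-1-l}g(s^{l}j)}x^{s^{t}j}$. Substituting $u=a^{i_1}x^{j_1}$ and $v=a^{i_2}x^{j_2}$ into $\varphi(uv)=\varphi(u)\varphi^{\pi(u)}(v)$, and using that $\pi_{j}:=\pi(a^{i}x^{j})$ depends only on $j$ by Proposition~\ref{CJT1}(a), comparison of the $x$-exponents and of the $i_2$-coefficients of the $a$-exponents gives $s^{\pi_{j_1}}\equiv s$ and $r^{\pi_{j_1}}\equiv r$; hence, writing $K$ for the least common multiple of the multiplicative orders of $r$ and $s$ modulo $p$, we get $K\mid\pi_{j}-1$ for all $j$, so $\pi_{j}=1+m_{j}K$ for some $m_{j}$ with $m_{0}=0$. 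The residual content of the identity is a functional equation for $g$; since $r^{-l}g(s^{l}j_2)$ has period dividing $K$ in $l$ and $r^{K}\equiv 1$, splitting $\sum_{l=0}^{\pi_{j_1}-1}$ into $m_{j_1}$ blocks of length $K$ plus one leftover term collapses it to
\[
g(j_1+j_2)\equiv g(j_1)+g(j_2)+m_{j_1}S(j_2)\pmod p,\qquad S(j):=\sum_{l=0}^{K-1}r^{-l}g(s^{l}j).
\]

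The heart of the argument is then to solve this cocycle equation. Symmetry in $j_1,j_2$ gives $m_{j_1}S(j_2)=m_{j_2}S(j_1)$, so $S(j)=c'm_{j}$ for a constant $c'$, which is nonzero once the automorphism case $\pi\equiv 1$ has been set aside. Exploiting the associativity relation of this $m$-cocycle and invoking Fermat's little theorem — the resulting recursion for $m_{j}$ must return to $m_{p}=m_{0}=0$ — forces $m_{j}=mj$ with $m\in\Z_p^{*}$, and solving the recursion $g(j+1)=g(j)+g(1)+c'm^{2}j$ then yields $g(j)=g(1)j+\tfrac12 c'm^{2}j(j-1)$. Substituting this closed form back into the definition of $S$, expanding $g(s^{l}j)$, and matching the $j^{2}$- and $j^{1}$-coefficients against $S(j)=c'mj$ reduces everything to the two geometric sums $\sum_{l=0}^{K-1}(s^{2}/r)^{l}$ and $\sum_{l=0}^{K-1}(s/r)^{l}$; since $(s^{2}/r)^{K}\equiv(s/r)^{K}\equiv 1$, each equals either $0$ or $K$ according to whether the ratio is $\ne 1$ or $=1$, and as the $j^{1}$-equation demands a nonzero value we are forced into $s=r$ — this is the crucial step identifying the two multipliers — with the $j^{2}$-equation then giving $r\ne 1$. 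Consequently $K=k$, the multiplicative order of $r$; computing that $\varphi^{k}$ acts on $A$ by $a^{i}x^{j}\mapsto a^{\,i+r^{-1}c'mj}x^{j}$ with $r^{-1}c'm\not\equiv 0$ shows $|\varphi|=pk$, and $\pi(a^{i}x^{j})\equiv 1+mjk\pmod{pk}$. Putting $n:=m$, reading off $d$ as the quadratic coefficient rescaled by $rn$, and $b:=a^{g(1)}$, the latter being determined by $(d,n,r)$ through the relation above, reproduces the stated formulas.

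Finally, for the converse direction one must check that for each $d,n\in\Z_p^{*}$ and $r\in\{2,\dots,p-1\}$ the displayed map — with $b$ taken from the normalising relation — is indeed a skew morphism with the asserted power function, order $pk$ and skew-type $p$; this is a direct, if somewhat tedious, verification of the defining identity using the iterate formula. The step I anticipate to be the main obstacle is the bookkeeping in the third paragraph: arranging the functional equation so that the block-collapse is transparent, disposing of the degenerate subcases (e.g.\ $m_{1}=0$) in the passage to $m_{j}=mj$, and isolating cleanly the geometric-sum computation that forces the two multipliers to coincide.
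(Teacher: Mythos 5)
This statement is quoted by the paper from \cite[Theorem~5.10]{CJT2016} and is not proved in the source you were given, so there is no in-paper proof to compare against; I will assess your argument on its own terms. Your overall strategy is sound and, as far as I can check, every main step is correct: the kernel has order $p$ and carries an automorphism $a\mapsto a^{r}$; the quotient by $\Ker\varphi$ gives the shape $\varphi(a^{i}x^{j})=a^{ri+g(j)}x^{sj}$; comparing exponents in $\varphi(uv)=\varphi(u)\varphi^{\pi(u)}(v)$ yields $r^{\pi_j}\equiv r$, $s^{\pi_j}\equiv s$ and hence $K\mid \pi_j-1$; the block-collapse produces the cocycle equation $g(j_1+j_2)=g(j_1)+g(j_2)+m_{j_1}S(j_2)$; symmetry gives $S(j)=c'm_j$ with $c'\neq 0$ in the proper case (since $S\equiv 0$ forces $g$ additive, hence $\varphi$ an automorphism); and the geometric-sum computation correctly forces $s=r$ and $r\neq 1$, after which the quadratic form of $g$, the order $pk$, and the power function all drop out and match the stated parametrization, with $b=a^{g(1)}$ pinned down by the linear-coefficient equation $(g(1)-\tfrac12 c'm^{2})k=c'm$.

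The one place that is genuinely underspecified is the passage to $m_j=mj$, and the issue is slightly more than bookkeeping: at the point where you run the recursion $m_{j+1}m_1=m_j(m_2-m_1)+m_1^{2}$ you need (i) that $m_j$ is well defined modulo $p$ (a priori it is only defined modulo $|\varphi|/K$, and $|\varphi|$ is unknown at that stage; changing the integer representative of $\pi_j$ changes the number of blocks by a multiple of $|\varphi|/K$), and (ii) that $m_1\not\equiv 0\pmod p$, which does not follow merely from $x\notin\Ker\varphi$ until you know $|\varphi|/K=p$. Both points are resolved cleanly if you reorder the argument: compute $\varphi^{K}(a^{i}x^{j})=a^{\,i+r^{-1}S(j)}x^{j}$ directly from your iterate formula (this needs only $r^{K}=s^{K}=1$, not $s=r$), deduce $|\varphi|=pK$ since $S\not\equiv 0$, and then invoke Proposition~\ref{CJT1}(a) to conclude that $j\mapsto\pi_j$ is injective on $\Z_p$, so $j\mapsto m_j$ is a bijection of $\Z_p$ and in particular $m_1\neq 0$. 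With that in hand your Fermat argument ($m_p=m_0=0$ kills the case $\alpha\neq 1$) goes through and the degenerate subcases you worried about disappear. The converse verification that each parameter triple genuinely yields a skew morphism remains to be written out, but it is routine given your explicit iterate formula.
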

\begin{lemma}\label{NON2}Let $p$ be an odd prime, then every proper skew 
morphism of the elementary abelian group $\Z_p\times\Z_p$ is non-smooth.
\end{lemma}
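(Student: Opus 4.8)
The plan is to read the failure of the smoothness condition \eqref{eq:smooth} straight off the explicit description in Proposition~\ref{NSE}. Let $\varphi$ be a proper skew morphism of $A\cong\Z_p\times\Z_p$ and fix a basis $(a,x)$ as in that proposition, so that
\[
\varphi(a^ix^j)=a^{ri+\frac12 dj(j-1)rn}(bx^r)^j,\qquad \pi(a^ix^j)\equiv 1+jnk\pmod{pk},
\]
where $d,n\in\Z_p^*$, $r\in\{2,\dots,p-1\}$, $b\in\sg{a}=\Ker\varphi$, and $k$ is the multiplicative order of $r$ modulo $p$. Since $\varphi$ is proper we have $r\ne 1$, whence $k>1$ and $|\varphi|=pk$.

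Next I would test the smoothness condition at the single element $x=a^0x^1$. On one side, $\pi(x)\equiv 1+nk\pmod{pk}$. On the other side, substituting $i=0$, $j=1$ into the formula for $\varphi$ gives $\varphi(x)=bx^r$; writing $b=a^c$ and using that the power function depends only on the exponent of $x$, we obtain $\pi(\varphi(x))=\pi(a^cx^r)\equiv 1+rnk\pmod{pk}$.

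Finally I would observe that $(1+rnk)-(1+nk)=(r-1)nk$, which is not $\equiv 0\pmod{pk}$: this reduces to $(r-1)n\not\equiv 0\pmod p$, and that holds because $r-1\in\{1,\dots,p-2\}$ and $n\in\Z_p^*$ are both nonzero modulo the prime $p$. Hence $\pi(\varphi(x))\not\equiv\pi(x)\pmod{|\varphi|}$, so $\varphi$ is non-smooth. There is no genuine obstacle here; the only points needing a moment of care are to keep the congruences modulo $pk$ (not merely modulo $p$) and to invoke properness only through $r\ne 1$ — the computation in fact shows that $\pi(\varphi(x))$ and $\pi(x)$ already differ modulo $p$.
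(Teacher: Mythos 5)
Your proposal is correct and follows essentially the same route as the paper's own proof: evaluate the power function from Proposition~\ref{NSE} at $x$ and at $\varphi(x)=bx^r$, and note that $1+rnk\not\equiv 1+nk\pmod{pk}$ since $(r-1)n\not\equiv 0\pmod p$. Your added justification of this last non-congruence is a welcome bit of extra detail, but the argument is the same.
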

\begin{proof}
Using the notation in Proposition~\ref{NSE}, we have $\pi(x)\equiv 1+nk \!\! \pmod{pk}$ 
and $\pi(\varphi(x))\equiv\pi(bx^r)\equiv\pi(x^r)\equiv1+rnk \!\! \pmod{pk}$,
and so $\pi(x)\not\equiv\pi(\varphi(x))\!\! \pmod{pk}$. Therefore, $\varphi$ is not smooth.
\end{proof}

\begin{lemma}\label{ZpZpNS}
Let $G = A \times B$ be an abelian group such that $A\cong\Z_p\times \Z_p$, 
where $p$ is an odd prime. Then  $G$ underlies a non-smooth skew morphism.
\end{lemma}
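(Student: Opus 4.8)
The plan is to exhibit a non-smooth skew morphism of $G=A\times B$ by taking the direct product of a proper skew morphism of $A\cong\Z_p\times\Z_p$ with a suitable skew morphism of $B$, and then invoking Lemma~\ref{direct} together with Lemma~\ref{NON2}. The subtlety is that the direct product of two skew morphisms need not be a skew morphism of the product group (Proposition~\ref{Z}(a) gives the precise obstruction), so the construction must be arranged so that the compatibility condition on the power functions modulo $d=\gcd(|\varphi|,|\psi|)$ is met.

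First I would pick a convenient proper skew morphism $\varphi$ of $A\cong\Z_p\times\Z_p$ from the list in Proposition~\ref{NSE}. The freedom in the parameters $(d,n,r)$ allows us to control $|\varphi|=pk$, where $k$ is the multiplicative order of $r$ modulo $p$; in particular, taking $r$ a primitive root modulo $p$ gives $k=p-1$ and $|\varphi|=p(p-1)$, while smaller choices of $k$ are also available. By Lemma~\ref{NON2}, any such $\varphi$ is non-smooth. Now take $\psi:=\id_B$, the identity permutation (an automorphism) of $B$, whose power function is identically $1$ and whose order is $1$. Then $d=\gcd(|\varphi|,1)=1$, so the congruence in Proposition~\ref{Z}(a) is vacuous, and hence $\varphi\times\id_B$ is a genuine skew morphism of $G=A\times B$. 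By Lemma~\ref{direct}, $\varphi\times\id_B$ is smooth if and only if both factors are smooth; since $\varphi$ is not smooth, neither is $\varphi\times\id_B$. This already proves the lemma.

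The one point that requires a sentence of care is the degenerate case $B=1$, i.e. $G=A\cong\Z_p\times\Z_p$ itself: here $G=A\times B$ with $B$ trivial, and the statement reduces directly to Lemma~\ref{NON2}, since $\Z_p\times\Z_p$ admits proper skew morphisms (the parameter set $\{2,\ldots,p-1\}$ for $r$ is nonempty as $p\ge 3$), all of which are non-smooth. For $B$ nontrivial the argument above applies verbatim, with no constraint on the structure of $B$. I do not anticipate a genuine obstacle here: the only thing to get right is the choice $\psi=\id_B$, which forces $d=1$ and thereby sidesteps the compatibility issue of Proposition~\ref{Z}(a) entirely, so the proof is essentially a two-line application of the preceding lemmas.
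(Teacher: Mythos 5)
Your proposal is correct and follows essentially the same route as the paper: take a proper (hence, by Lemma~\ref{NON2}, non-smooth) skew morphism of $\Z_p\times\Z_p$ from Proposition~\ref{NSE}, form its direct product with $\id_B$, and apply Lemma~\ref{direct}. Your explicit check via Proposition~\ref{Z}(a) that $\varphi\times\id_B$ is in fact a skew morphism (because $d=1$) is a hypothesis of Lemma~\ref{direct} that the paper leaves implicit, so that extra sentence is a small improvement rather than a deviation.
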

\begin{proof}
By Lemma~\ref{NSE}, the elementary abelian group $A\cong\Z_p\times \Z_p$ has a non-smooth
skew morphism $\alpha$, so by Lemma~\ref{direct} $\varphi:=\alpha\times\mathrm{id}_{B}$ 
is a non-smooth skew morphism of $G$.
\end{proof}

\begin{proof}[Proof of Theorem~\ref{main2}]
If the Sylow $2$-group $P$ of $A$ contains  a direct factor isomorphic to $\mathbb{Z}_{2^e}$
 for some $e\geq 5$, then by Lemma~\ref{direct} and  Lemma~\ref{PNS} we can construct 
 a non-smooth skew morphism of $A$. 

If  $n_1$ is not square-free.  Then there is an odd prime $p$ such that $p^2\mid n$, and so
  either $A\cong\mathbb{Z}_p\times\Z_p\times B$,  or $A\cong\Z_{p^i}\times B$ for some $i\geq 2$
   and some subgroup $B\leq A$. By Lemmas~\ref{PNS}(a) and~\ref{ZpZpNS},
  both $\Z_p\times\Z_p$ and $\Z_{p^i}$ underly  a non-smooth skew morphism $\alpha$. Therefore,
by Lemma~\ref{direct}  the skew morphism $\varphi=\alpha\times \id_{B}$  is a non-smooth skew morphism of $A$.
\end{proof}

Let $A$ be a non-cyclic abelian group of order $n=2^fn_1$, where $n_1$ is odd. 
If every skew morphism of $A$ is smooth. Then, by Theorem~\ref{main2},  $n_1$ is square-free and 
the Sylow $2$-subgroup $P$ of $A$ contains no direct factors isomorphic to $\mathbb{Z}_{2^e}$ for any $e\geq 5$.
Thus,  $P\cong\Z_{2^{e_1}}\times\cdots\times \Z_{2^{e_r}}$ for some positive integer $r$, where $1\leq e_i\leq 4$ for all $1\leq i\leq r$. 
In particular, if $e_1=e_2=\cdots=e_r=1$, it is known that every skew morphism of $P$
is an automorphism (see~\cite[Theorem~5.8]{CJT2016}), so it is smooth. But for other cases, little is known.
For an abelian $2$-group $\Z_{2^{e_1}}\times\cdots\times \Z_{2^{e_r}}$, we call the $r$-tuple $(e_1,\ldots, e_r)$  \textit{admissible} if  every
skew morphism of the abelian group  $\Z_{2^{e_1}}\times\cdots\times \Z_{2^{e_r}}$ is smooth.
 \begin{problem}
Determine the admissible $r$-tuples for all $r\geq 1$.
 \end{problem}

  \begin{problem}
 Suppose that $(e_1,\ldots, e_r)$ is an admissible $r$-tuple, and $A$ is an abelian group such that its Sylow $2$-subgroup
 $P\cong\Z_{2^{e_1}}\times\cdots\times \Z_{e^r}$ and the order of its Hall $2'$-subgroup is square-free, is it true that every
 skew morphism of $A$ is smooth?
   \end{problem}

%--------------------------------------------------------------------------------------------------------------%

\end{document}